\newtheorem{thm}{Theorem}
\newtheorem{cor}[thm]{Corollary}
\newtheorem{conj}[thm]{Conjecture}
\newtheorem{lem}[thm]{Lemma}
\newtheorem{prop}[thm]{Proposition}
\theoremstyle{definition}
\newtheorem*{conj*}{Conjecture}
\sffamily \textbf{#1}%
\rmfamily\end{framed}}
\newcommand{\Path}[1]{\ensuremath{{\sf P}_{#1}}}
\newcommand{\Q}[1]{\ensuremath{{\sf Q}_{#1}}}
\newcommand{\PathBar}[1]{\ensuremath{{\sf \overline{P}}_{#1}}}
\newcommand{\C}[1]{\ensuremath{{\sf C}_{#1}}}
\newcommand{\CBar}[1]{\ensuremath{{\sf \overline{C}}_{#1}}}
\newcommand{\K}[1]{\ensuremath{{\sf K}_{#1}}}
\newcommand{\B}[1]{\ensuremath{{\sf B}_{#1}}}
\newcommand{\TBell}[1]{\ensuremath{{\sf T}_{#1}}}
\newcommand{\ABell}[1]{\ensuremath{{\sf A}_{#1}}}
\newcommand{\E}[1]{\ensuremath{\overline{{\sf K}}_{#1}}}
\newcommand{\GPH}{\emph{GraPHedron}}
\newcommand{\PHOEG}{\emph{PHOEG}}
\newcommand{\ncol}{\ensuremath{\mathcal{B}}}
\newcommand{\avcol}{\ensuremath{\mathcal{A}}}
\newcommand{\totcol}{\ensuremath{\mathcal{T}}}
\newcommand{\ncolk}{\ensuremath{S}}
\newcommand{\chrom}{\ensuremath{\chi}}
\newcommand{\stirlings}[2]{\genfrac\{\}{0pt}{}{#1}{#2}}
\newcommand{\T}{\totcol}
\newcommand{\A}{\avcol}
\newcommand{\merge}[2]{\ensuremath{#1_{\mid #2}}}
\tikzstyle{vertex}=[circle, draw, fill=black, minimum size=8pt, inner sep=0pt]
\title{Lower Bounds and properties for the average number of colors in the non-equivalent colorings of a graph}
\author{
		Alain Hertz\textsuperscript{1},
		Hadrien M\'elot\textsuperscript{2},
		S\'ebastien Bonte\textsuperscript{2},
		Gauvain Devillez\textsuperscript{2},
		\\[3mm]
		\footnotesize \textsuperscript{1} Department of Mathematics and Industrial
	Engineering\\
	\footnotesize Polytechnique Montr\'eal - Gerad, Montr\'eal, Canada\\
	\footnotesize Corresponding author. Email: alain.hertz@gerad.ca\\[3mm]
	 \footnotesize \textsuperscript{2} Computer Science Department - Algorithms Lab\\
	 \footnotesize University of Mons, Mons, Belgium
	}
\begin{document}

\maketitle
\vspace*{0.2cm}

\hrule
\vspace*{0.2cm}
\small
\noindent
\textbf{Abstract.} \\
We study the average number $\A(G)$ of colors in the non-equivalent colorings of a graph $G$. We show some general properties of this graph invariant and determine its value for some classes of graphs. We then conjecture several lower bounds on $\A(G)$ and prove that these conjectures are true for specific classes of graphs such as triangulated graphs and graphs with maximum degree at most 2.

\vspace*{0.2cm}
\noindent
\emph{Keywords:} graph coloring, average number of colors, graphical Bell numbers.

\vspace*{0.2cm}
\hrule

\normalsize

\section{Introduction} \label{sec_intro}

A coloring of a graph $G$ is an assignment of colors to its vertices such that adjacent vertices have different colors. The chromatic number $\chi(G)$ is the minimum number of colors in a coloring of $G$. The total number $\ncol(G)$ of non-equivalent colorings (i.e., with different partitions into color classes) of a graph $G$ is the number of partitions of the vertex set of $G$ whose blocks are stable
sets (i.e., sets of pairwise non-adjacent vertices). This invariant has been studied by several authors in the last few years \cite{absil,Duncan10,DP09,GT13,Hertz16,KN14} under the name of (graphical) Bell number. 

Recently, Hertz et {\emph al.} have defined a new graph invariant $\avcol(G)$ 
which is equal to the  average number of colors in the non-equivalent  colorings of a graph $G$.
It can be seen as a generalization of a concept linked to Bell numbers. More precisely, the Bell numbers $(\B{n})_{n\geq 0}$ count the number of different ways to partition a set that has exactly $n$ elements. The 2-Bell numbers $(\TBell{n})_{n\geq 0}$ count the total number of blocks in all partitions of a set of $n$ elements. Odlyzko and Richmond \cite{OR} have studied the average number $\ABell{n}$ of blocks in a partition of a set of $n$ elements, which can be defined as $\ABell{n}=\frac{\TBell{n}}{\B{n}}.$
The graph invariant $\avcol(G)$ that we study in this paper generalizes $\ABell{n}$. Indeed, when constraints (represented by edges in $G$) impose that certain pairs of elements (represented by vertices) cannot belong to the same block of a partition, $\avcol(G)$ is the average number of blocks in the partitions that respect all constraints. Hence, for a graph of order $n$, $\avcol(G)=\ABell{n}$ if $G$ is the empty graph of order $n$.

As shown in \cite{Hertz21}, $\A(G)$ can help discover nontrivial inequalities for the Bell numbers. For example, we will see that $\avcol(\Path{n})=\frac{\B{n}}{\B{n-1}}$ and $\avcol(\Path{n})<\avcol(\Path{n+1})$ for $n\geq 1$, where $\Path{n}$ is the path on $n$ vertices. This immediately implies $\B{n}^2<\B{n-1}\B{n+1}$, which means that the sequence $(\B{n})_{n\geq 0}$ is strictly log-convex. This result has also been proved recently by Alzer~\cite{Alzer19} using numerical arguments. 

The best possible upper bound for $\avcol(G)$ is clearly the order $n$ of $G$ since all colorings of $G$ use at most $n$ colors and $\avcol(\K{n})=n$ for the clique $\K{n}$ of order $n$. It seems however much more complex to define a lower bound for $\avcol(G)$, as a function of $n$, which is reached by at least one graph of order $n$. We think that the best possible lower bound is $\frac{\B{n+1} - \B{n}}{\B{n}}$ and is reached by the empty graph of order $n$. But it is just a conjecture we are trying to prove.

In the next section we fix some notations, while Section \ref{sec_prop} is devoted to basic properties of $\avcol(G)$.
In Section \ref{sec:values}, we give values of $\avcol(G)$ for some particular graphs $G$ that we will deal with later. We then state in Section \ref{sec_lb} three conjectures on a lower bound for $\avcol(G)$ and prove that they are true 
for graphs $G$ with maximum degree $\Delta(G)\leq 2$ and for triangulated graphs.

\section{Notation} \label{sec_nota}

For basic notions of graph theory that are not defined here, we refer to Diestel~\cite{Diestel00}. Let $G = (V,E)$ be a simple undirected graph. The \emph{order} $n = |V|$ of $G$ is its number of vertices and the \emph{size} $m=|E|$ of $G$ is its number of edges.  We write $G \simeq H$ if $G$ and $H$ are two isomorphic graphs, and $\overline{G}$ is the complement of $G$. We denote by $\K{n}$ (resp. $\C{n}$, $\Path{n}$ and $\E{n}$) the \emph{complete graph} (resp. the \emph{cycle}, the \emph{path} and the empty graph) of order $n$. We write $\K{a, b}$ for the complete bipartite graph where $a$ and $b$ are the cardinalities of the two sets of vertices of the bipartition. For a subset $S$ of vertices in a graph $G$, we write $G[S]$ for the subgraph of $G$ induced by $S$.

Let $N(v)$ be the set of neighbors of a vertex $v$ in $G$. A vertex $v$ is \emph{isolated} if $|N(v)| = 0$ and is \emph{dominant} if $|N(v)| = n-1$ (where $n$ is the order of $G$). We write $\Delta(G)$ for the \emph{maximum degree} of $G$. A vertex $v$ of a graph $G$ is \emph{simplicial} if the induced subgraph $G[N(v)]$ of $G$ is a clique. A graph is \emph{triangulated} if each of its induced subgraphs contains a simplicial vertex.

Let $u$ and $v$ be two vertices in a graph $G$ of order $n$, We use the following notations:
\begin{itemize}\setlength\itemsep{-3pt}
	\item $\merge{G}{uv}$ is the graph (of order $n-1$) obtained from $G$ by identifying (merging) the vertices $u$ and $v$ and, if $uv \in E(G)$, by removing the edge $uv$;
	\item if $uv \in E(G)$, $G - uv$ is the graph obtained by removing the edge $uv$ from $G$;
	\item if $uv \notin E(G)$, $G + uv$ is the graph obtained by adding the edge $uv$ in $G$;
	\item $G - v$ is the graph obtained from $G$ by removing $v$ and all its incident edges.
\end{itemize}

 Given two graphs $G_1$ and $G_2$ (with disjoint sets of vertices), we write $G_1\cup G_2$ for the \emph{disjoint union} of $G_1$ and $G_2$, while the \emph{join} $G_1 + G_2$ of $G_1$ and $G_2$ is the graph obtained from  $G_1\cup G_2$ by adding all possible edges between the vertices of $G_1$ and those of $G_2$. Also, $G\cup p\K{1}$ is the graph obtained from $G$ by adding $p$ isolated vertices.

A \emph{coloring} of a graph $G$ is an assignment of colors to the vertices of $G$ such that adjacent vertices have different colors. The \emph{chromatic number} $\chrom(G)$ of $G$ is the minimum number of colors in a coloring of $G$. Two colorings are \emph{equivalent} if they induce the same partition of the vertex set into color classes. Let $\ncolk(G,k)$ be the number of non-equivalent colorings of a graph $G$ that use \emph{exactly} $k$ colors. Then, the total number $\ncol(G)$ of non-equivalent colorings of a graph $G$ is defined by 
$$\ncol(G) = \sum_{k = 1}^n \ncolk(G, k)=\sum_{k = \chrom(G)}^n \ncolk(G, k),$$
and the total number $\totcol(G)$ of color classes in the non-equivalent colorings of a graph $G$ is defined by
$$\totcol(G) = \sum_{k = 1}^n k \ncolk(G, k)=\sum_{k = \chrom(G)}^n k \ncolk(G, k).$$
The average number $\A(G)$ of colors in the non-equivalent colorings of a graph $G$ can therefore be defined as
$$\avcol(G) =\frac{\totcol(G)}{\ncol(G)}.$$

Note that 
$\ncol(\E{n})=\B{n}, \totcol(\E{n})=\TBell{n}$, and $\avcol(\E{n})=\ABell{n}$. As another example, consider the complement $\overline{\ensuremath{{\sf P}}}_5$ of a path on 5 vertices. As shown in Figure \ref{fig:PB5}, there are three non-equivalent colorings of $\overline{\ensuremath{{\sf P}}}_5$ with 3 colors, four with 4 colors, and one with 5 colors, which gives $\ncol(\overline{\ensuremath{{\sf P}}}_5)=8$, $\totcol(\overline{\ensuremath{{\sf P}}}_5)=30$ and $\A(\overline{\ensuremath{{\sf P}}}_5)=\frac{30}{8}=3.75.$

\begin{figure}[!hbtp]
	\centering
	\includegraphics[scale = 0.83]{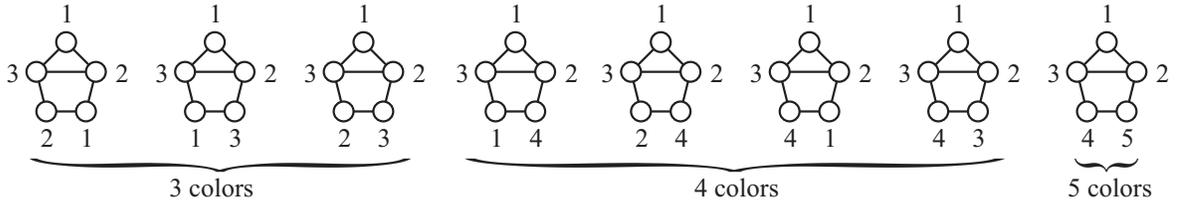}
	\caption{The non-equivalent colorings of $\overline{\ensuremath{{\sf P}}}_5$.}\label{fig:PB5}
\end{figure}

\section{Basic properties of $\A(G)$} \label{sec_prop}
 \allowdisplaybreaks
As for several other invariants in graph coloring, the \emph{deletion-contraction} rule (also often called the {\it Fundamental Reduction Theorem}~\cite{DKT05}) can be used to compute $\ncol(G)$ and $\totcol(G)$.  More precisely, let $u$ and $v$ be any pair of distinct vertices of $G$. As shown in~\cite{DP09,KN14}, we have 
\begin{align}
S(G, k) = S(G - uv, k) - S(G_{\mid uv}, k)& \quad \forall uv \in E(G),\label{recs_minus}\\
S(G, k) = S(G + uv, k) + S(G_{\mid uv}, k) & \quad \forall uv \notin E(G)\label{recs_plus}.
\end{align}
It follows that
\begin{align}
\left.
\begin{array}{ll}
\ncol(G) = \ncol(G - uv) - \ncol(\merge{G}{uv})\\
\T(G) = \T(G - uv) - \T(\merge{G}{uv})
\end{array}
\right\}& \quad \forall uv \in E(G),\label{rec_minus}\\
\left.
\begin{array}{ll}
\ncol(G) = \ncol(G + uv) + \ncol(\merge{G}{uv})\\
\T(G) = \T(G + uv) + \T(\merge{G}{uv})
\end{array}
\right\}& \quad \forall uv \notin E(G).\label{rec_plus}
\end{align}

\begin{thm} \label{thm:avColJoin}
	Given any two graphs $G_1$ and $G_2$, we have
	$$
	\avcol(G_1+G_2) = \avcol(G_1) + \avcol(G_2).
	$$
\end{thm}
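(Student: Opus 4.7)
The plan is to exploit the fact that in the join $G_1+G_2$ every vertex of $G_1$ is adjacent to every vertex of $G_2$, so that colorings of $G_1+G_2$ decompose canonically into independent colorings of $G_1$ and $G_2$. More precisely, I would first observe that if $C$ is a color class of a coloring of $G_1+G_2$, then $C$ cannot contain a vertex of $G_1$ and a vertex of $G_2$ simultaneously (they would be adjacent), so $C \subseteq V(G_1)$ or $C \subseteq V(G_2)$. Restricting any coloring of $G_1+G_2$ to $V(G_1)$ and to $V(G_2)$ therefore yields a coloring of $G_1$ and a coloring of $G_2$ respectively, and conversely any pair of colorings of $G_1$ and $G_2$ (using disjoint sets of color names) fits together to form a valid coloring of $G_1+G_2$.

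Next I would argue that this correspondence descends to a bijection between non-equivalent colorings of $G_1+G_2$ and pairs of non-equivalent colorings of $G_1$ and $G_2$. Indeed, two colorings of $G_1+G_2$ induce the same partition of $V(G_1)\cup V(G_2)$ into color classes if and only if they induce the same partitions on $V(G_1)$ and on $V(G_2)$. Furthermore, if the coloring of $G_1$ uses $k_1$ colors and the coloring of $G_2$ uses $k_2$ colors, the corresponding coloring of $G_1+G_2$ uses $k_1+k_2$ colors, since the color classes of the two parts are automatically disjoint.

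From the bijection I would derive
\begin{align*}
\ncol(G_1+G_2) &= \ncol(G_1)\,\ncol(G_2),\\
\totcol(G_1+G_2) &= \sum_{c_1,c_2}\bigl(k(c_1)+k(c_2)\bigr) = \totcol(G_1)\,\ncol(G_2)+\ncol(G_1)\,\totcol(G_2),
\end{align*}
where the sums run over all pairs of non-equivalent colorings $c_1$ of $G_1$ and $c_2$ of $G_2$. Dividing the second identity by the first yields
\[
\avcol(G_1+G_2)=\frac{\totcol(G_1)}{\ncol(G_1)}+\frac{\totcol(G_2)}{\ncol(G_2)}=\avcol(G_1)+\avcol(G_2),
\]
which is the desired equality.

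The only genuinely careful step is the bijection in the second paragraph: one must check that the correspondence between partitions of $V(G_1+G_2)$ into stable sets and pairs of partitions of $V(G_1)$ and $V(G_2)$ into stable sets is well-defined in both directions and preserves equivalence. Everything else is then a short algebraic manipulation.
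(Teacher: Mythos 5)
Your proposal is correct and follows essentially the same route as the paper: both rest on the observation that no color class of $G_1+G_2$ can meet both $V(G_1)$ and $V(G_2)$, giving $\ncol(G_1+G_2)=\ncol(G_1)\ncol(G_2)$ and $\totcol(G_1+G_2)=\totcol(G_1)\ncol(G_2)+\ncol(G_1)\totcol(G_2)$, after which the result is immediate. The only cosmetic difference is that you sum $(k(c_1)+k(c_2))$ directly over pairs of colorings, while the paper groups the same sum by the numbers of colors $k,k'$ using $\ncolk(G_i,\cdot)$.
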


\begin{proof}
	As observed in \cite{absil}, given any coloring of $G_1+G_2$, none of the vertices of $G_1$ can share a color with a vertex of $G_2$, which immediately gives $\ncol(G_1+G_2)=\ncol(G_1)\ncol(G_2)$. For $\totcol(G_1+G_2)$, assuming that $G_1$ and $G_2$ are of order $n_1$ and $n_2$, respectively, we get
	\begin{align*}
	\totcol(G_1+G_2)&=\sum_{k=1}^{n_1}\sum_{k'=1}^{n_2}(k+k')\ncolk(G_1,k)\ncolk(G_2,k')=\sum_{k=1}^{n_1}\ncolk(G_1,k)
	\sum_{k'=1}^{n_2}(k+k')\ncolk(G_2,k')\\
	&=\sum_{k=1}^{n_1}\ncolk(G_1,k)\left(k\sum_{k'=1}^{n_2}\ncolk(G_2,k')+\sum_{k'=1}^{n_2}k'\ncolk(G_2,k')\right)\\
	&=\sum_{k=1}^{n_1}k\ncolk(G_1,k)\sum_{k'=1}^{n_2}\ncolk(G_2,k')+\sum_{k=1}^{n_1}\ncolk(G_1,k)\sum_{k'=1}^{n_2}k'\ncolk(G_2,k')\\
	&=\totcol(G_1)\ncol(G_2)+\ncol(G_1)\totcol(G_2).
	\end{align*}
	Hence,
	\begin{align*}
	\avcol(G_1+G_2) &= \frac{\totcol(G_1+G_2)}{\ncol(G_1+G_2)}=\frac{\totcol(G_1) \ncol(G_2) +  \ncol(G_1)\totcol(G_2)}{\ncol(G_1) \ncol(G_2)}\\
	& = \frac{\totcol(G_1)}{\ncol(G_1)} + \frac{\totcol(G_2)}{\ncol(G_2)}  = \avcol(G_1) + \avcol(G_2).
	\end{align*}
\end{proof}
\noindent The following Corollary is also proved in \cite{Hertz21}.

\begin{cor}\label{pro_dominant}
If $v$ is a dominant vertex of a graph $G$, then,
$$
\avcol(G) = \avcol(G - v) + 1.
$$
\end{cor}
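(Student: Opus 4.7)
The plan is to recognize that a dominant vertex $v$ of $G$ is, by definition, adjacent to every other vertex, so $G$ can be written as the join $\K{1} + (G-v)$, where $\K{1}$ denotes the graph consisting of the single vertex $v$. Once this structural observation is in place, the corollary should fall out immediately from Theorem \ref{thm:avColJoin} applied to $G_1 = \K{1}$ and $G_2 = G-v$.

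The only remaining task is to verify that $\avcol(\K{1}) = 1$, which is a direct computation: the graph $\K{1}$ admits a single coloring, using exactly one color, so $\ncol(\K{1}) = \totcol(\K{1}) = 1$ and hence $\avcol(\K{1}) = 1$. Substituting this into the identity $\avcol(G_1 + G_2) = \avcol(G_1) + \avcol(G_2)$ of Theorem \ref{thm:avColJoin} gives $\avcol(G) = 1 + \avcol(G-v)$, which is the stated conclusion.

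I do not anticipate any real obstacle here; the corollary is essentially the special case $G_1 = \K{1}$ of the join theorem, and the only thing to be careful about is making the identification $G = \K{1} + (G-v)$ explicit, using the assumption that $v$ is adjacent to all other $n-1$ vertices of $G$.
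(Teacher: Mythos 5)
Your proposal is correct and follows exactly the paper's own argument: identify $G\simeq(G-v)+\K{1}$, note $\avcol(\K{1})=1$, and apply Theorem \ref{thm:avColJoin}. Nothing is missing.
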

\begin{proof}
	If $v$ is a dominant vertex of a graph $G$, then $G\simeq(G-v)+\K{1}$, and since $\avcol(\K{1})=1$, Theorem \ref{thm:avColJoin} gives $\avcol(G)=\avcol(G-v)+1$.
\end{proof}

In the following, given a subset $W$ of vertices in a graph $G$, we denote by $\ncolk_{W,i}(G,k)$ the number of non-equivalent colorings of $G$ that use exactly $k$ colors, and where exactly $i$ of them appear on $W$. Hence, $\ncolk(G,k)=\sum_{i=0}^{|W|}\ncolk_{W,i}(G,k)$.

\begin{lem} \label{lem:AddNode}
Let $v$ be a vertex in a graph $G$ of order $n$ and let $N(v)$ be its set of neighbors in $G$. Then
\begin{itemize}\setlength\itemsep{-3pt}
\item $\displaystyle\ncol(G) = \ncol(G-v) + \sum_{k=1}^{n-1}\sum_{i=0}^{|N(v)|}(k-i)\ncolk_{N(v),i}(G-v,k)$, and
\item $\displaystyle\totcol(G) = \totcol(G-v) + \sum_{k=1}^{n-1}\sum_{i=0}^{|N(v)|}(k(k-i)+1)\ncolk_{N(v),i}(G-v,k)$.
\end{itemize} 
\end{lem}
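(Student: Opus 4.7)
The plan is to prove both formulas by a single extension argument: every coloring of $G$ is obtained in a unique way from a coloring of $G-v$ by assigning a color to $v$, so I just need to count, for each coloring of $G-v$, how many extensions it has, and with how many colors each extension uses.

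First, I would fix a coloring $c$ of $G-v$ that uses exactly $k$ colors and such that exactly $i$ of them appear on $N(v)$. To extend $c$ to a coloring of $G$, I must assign $v$ a color that differs from the colors used on $N(v)$. There are two mutually exclusive cases: either (a) I reuse one of the $k-i$ colors already present in $c$ but absent from $N(v)$, producing a coloring of $G$ that uses exactly $k$ colors, or (b) I introduce a new color not appearing in $c$, producing a coloring of $G$ that uses exactly $k+1$ colors. Case (b) yields a single extension because all non-equivalent colorings obtained this way induce the same partition (the new color class is $\{v\}$). Thus $c$ admits exactly $(k-i)+1$ non-equivalent extensions.

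For the first formula I sum these extension counts over all colorings of $G-v$:
\begin{align*}
\ncol(G) &= \sum_{k=1}^{n-1}\sum_{i=0}^{|N(v)|}\bigl((k-i)+1\bigr)\,\ncolk_{N(v),i}(G-v,k)\\
&= \ncol(G-v) + \sum_{k=1}^{n-1}\sum_{i=0}^{|N(v)|}(k-i)\,\ncolk_{N(v),i}(G-v,k),
\end{align*}
where I used $\ncol(G-v)=\sum_{k,i}\ncolk_{N(v),i}(G-v,k)$. For the second formula I weight each extension by its number of colors: each of the $k-i$ extensions of type (a) contributes $k$, and the single extension of type (b) contributes $k+1$, for a total contribution of $k(k-i)+(k+1)$ per coloring $c$. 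Therefore
\begin{align*}
\totcol(G) &= \sum_{k=1}^{n-1}\sum_{i=0}^{|N(v)|}\bigl(k(k-i)+k+1\bigr)\,\ncolk_{N(v),i}(G-v,k)\\
&= \totcol(G-v) + \sum_{k=1}^{n-1}\sum_{i=0}^{|N(v)|}\bigl(k(k-i)+1\bigr)\,\ncolk_{N(v),i}(G-v,k),
\end{align*}
using $\totcol(G-v)=\sum_{k,i} k\,\ncolk_{N(v),i}(G-v,k)$.

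There is no serious obstacle: the only point that needs care is making sure the extension map is a bijection between colorings of $G$ and (coloring of $G-v$, choice of color for $v$) pairs, which reduces to the elementary observation that two colorings of $G$ induce the same partition if and only if their restrictions to $G-v$ induce the same partition \emph{and} $v$ is placed in the corresponding block (or a newly created singleton block) in each. That justifies counting extensions up to equivalence as done above.
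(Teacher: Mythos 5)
Your proof is correct and is essentially the same argument as the paper's: the paper encodes your extension count in the identity $\ncolk_{N(v),i}(G,k) = \ncolk_{N(v),i}(G-v,k-1)+(k-i)\ncolk_{N(v),i}(G-v,k)$ and then sums over $k$ and $i$ with an index shift, which is exactly your tally of $k-i$ color-reusing extensions plus one new-color extension per coloring of $G-v$. The only difference is presentational (you group the sum by colorings of $G-v$, the paper by colorings of $G$), so nothing further is needed.
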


\begin{proof}
Since $\ncolk_{N(v),i}(G,k) = \ncolk_{N(v),i}(G-v,k-1)+(k-i)\ncolk_{N(v),i}(G-v,k)$, we have
\begin{align*}
\ncol(G) &=\sum_{k=1}^{n}\ncolk(G,k)= \sum_{k=1}^{n}\sum_{i=0}^{|N(v)|}\ncolk_{N(v),i}(G,k)\\
&
= \sum_{k=1}^{n}\sum_{i=0}^{|N(v)|}\ncolk_{N(v),i}(G-v,k-1)+\sum_{k=1}^{n-1}\sum_{i=0}^{|N(v)|}(k-i)\ncolk_{N(v),i}(G-v,k)\\
&= \sum_{k=1}^{n-1}\sum_{i=0}^{|N(v)|}\ncolk_{N(v),i}(G-v,k)+\sum_{k=1}^{n-1}\sum_{i=0}^{|N(v)|}(k-i)\ncolk_{N(v),i}(G-v,k)\\
&= \ncol(G-v) + \sum_{k=1}^{n-1}\sum_{i=0}^{|N(v)|}(k-i)\ncolk_{N(v),i}(G-v,k)\\
\end{align*}

\vspace{-1.0cm}and
\begin{align*}
\totcol(G)&=\sum_{k=1}^{n}k\ncolk(G,k)= \sum_{k=1}^{n}\sum_{i=0}^{|N(v)|}k\ncolk_{N(v),i}(G,k)\\&
  = \sum_{k=1}^{n}\sum_{i=0}^{|N(v)|}k\ncolk_{N(v),i}(G-v,k-1)+\sum_{k=1}^{n-1}\sum_{i=0}^{|N(v)|}k(k-i)\ncolk_{N(v),i}(G-v,k)  \\
  &= \sum_{k=1}^{n-1}\sum_{i=0}^{|N(v)|}(k+1)\ncolk_{N(v),i}(G-v,k)+\sum_{k=1}^{n-1}\sum_{i=0}^{|N(v)|}k(k-i)\ncolk_{N(v),i}(G-v,k)\\
  &=
  \sum_{k=1}^{n-1}\sum_{i=0}^{|N(v)|}k\ncolk_{N(v),i}(G-v,k)+ \sum_{k=1}^{n-1}\sum_{i=0}^{|N(v)|}(k(k-i)+1)\ncolk_{N(v),i}(G-v,k)\\
  &= \totcol(G-v) + \sum_{k=1}^{n-1}\sum_{i=0}^{|N(v)|}(k(k-i)+1)\ncolk_{N(v),i}(G-v,k).\qedhere
\end{align*}

\end{proof}

\begin{thm}\label{thm:xhi}
Let $v$ be a vertex in a graph $G$. If $\chrom(G[N(v)])\geq|N(v)|-3$ then \linebreak[4]$\avcol(G)>\avcol(G-v)$.
\end{thm}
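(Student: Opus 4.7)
The plan is to reduce the inequality $\avcol(G) > \avcol(G-v)$ to a quantitative positivity statement involving the variance and covariance of two statistics of a uniformly random non-equivalent coloring of $G-v$. Set $M = \ncol(G-v)$, $T = \totcol(G-v)$, and
$$X = \sum_{k,i}(k-i)\,\ncolk_{N(v),i}(G-v,k), \qquad Y = \sum_{k,i}\bigl(k(k-i)+1\bigr)\,\ncolk_{N(v),i}(G-v,k).$$
By Lemma~\ref{lem:AddNode}, $\ncol(G) = M + X$ and $\totcol(G) = T + Y$, and a short manipulation gives
$$\avcol(G) > \avcol(G-v) \iff YM - XT > 0.$$

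Next I would let $d = |N(v)|$ and substitute $j = d - i$, writing $N_{k,j} = \ncolk_{N(v),d-j}(G-v,k)$, so that $k - i = (k-d) + j$. Expanding $YM - XT$ as a double sum over pairs $(k,j)$ and $(k',j')$ and symmetrizing each summand with its swap yields the identity
$$YM - XT = \frac{1}{2}\sum_{k,j,k',j'}\bigl[(k-k')^2 + (k-k')(j-j') + 2\bigr]N_{k,j}N_{k',j'} = M^2\bigl[\operatorname{Var}(k) + \operatorname{Cov}(k,j) + 1\bigr],$$
where the variance and covariance are taken with respect to the probability distribution on $(k,j)$ that assigns mass $N_{k,j}/M$ to each pair, i.e., the one induced by a uniformly random non-equivalent coloring of $G-v$.

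The hypothesis enters here. The restriction to $N(v)$ of any non-equivalent coloring of $G-v$ is a proper coloring of $G[N(v)]$, so it uses at least $\chrom(G[N(v)]) \geq d-3$ distinct colors; equivalently, $j \in \{0,1,2,3\}$ almost surely. Popoviciu's inequality (or the direct estimate $j^2 \le 3j$) gives $\operatorname{Var}(j) \leq 9/4$, and Cauchy--Schwarz yields $\operatorname{Cov}(k,j) \geq -\tfrac{3}{2}\sqrt{\operatorname{Var}(k)}$. Writing $s = \sqrt{\operatorname{Var}(k)}$,
$$\operatorname{Var}(k) + \operatorname{Cov}(k,j) + 1 \geq s^2 - \tfrac{3}{2}s + 1 = \bigl(s - \tfrac{3}{4}\bigr)^2 + \tfrac{7}{16} > 0.$$
Since $M \geq 1$, this gives $YM - XT > 0$, as required.

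The main obstacle is finding the right symmetrization so that the identity $YM - XT = M^2(\operatorname{Var}(k) + \operatorname{Cov}(k,j) + 1)$ comes out cleanly. The constant $+1$ inside $Y$ is precisely what supplies the $+1$ in the bracket, which in turn makes the final quadratic $s^2 - \tfrac{3}{2}s + 1$ have negative discriminant $-\tfrac{7}{4}$ and hence be positive; the hypothesis $\chrom(G[N(v)]) \geq |N(v)| - 3$ sits exactly at the boundary of what this computation can absorb, since allowing $j$ to range up to $4$ would give a quadratic with non-negative discriminant and the argument would collapse.
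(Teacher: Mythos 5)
Your proof is correct. Its skeleton coincides with the paper's: the use of Lemma~\ref{lem:AddNode}, the reduction of $\avcol(G)>\avcol(G-v)$ to the positivity of $YM-XT$ (the paper's $a\ncol(G-v)-b\totcol(G-v)$), and the symmetrized expansion whose generic off-diagonal coefficient is $(k-k')^2-(k-k')(i-i')+2$ --- which is exactly your $(k-k')^2+(k-k')(j-j')+2$ after the substitution $j=|N(v)|-i$ --- are all the same. You diverge only in how positivity of this quadratic form is established. The paper argues termwise: the hypothesis forces $i\ge |N(v)|-3$ on the support, hence $|i-i'|\le 3$, and a case analysis on $k-k'\in\{0,1,2,\ge 3\}$ shows every symmetrized summand is nonnegative while the nonempty diagonal contributes strictly positive terms. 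You instead read the whole sum as $M^2\bigl(\operatorname{Var}(k)+\operatorname{Cov}(k,j)+1\bigr)$ under the uniform distribution on colorings of $G-v$ and bound it in aggregate via Popoviciu and Cauchy--Schwarz, obtaining the uniform lower bound $\tfrac{7}{16}M^2$. Both finishes are valid; the paper's is more elementary, while yours is slicker, yields a quantitative gap, and makes transparent why the constant $+1$ in Lemma~\ref{lem:AddNode} and the threshold $|N(v)|-3$ are exactly what the argument can absorb (the paper's termwise check also fails one step further, e.g.\ $k-k'=2$ and $i-i'=4$ gives $-2$).
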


\begin{proof}

Let $n$ be the order of $G$. We know from Lemma~\ref{lem:AddNode} that
\begin{align*}
\avcol(G) - \avcol(G-v) = \frac{\totcol(G-v)+a}{\ncol(G-v)+b} - \frac{\totcol(G-v)}{\ncol(G-v)} 
					  = \frac{a\ncol(G-v) - b\totcol(G-v)}{\ncol(G)\ncol(G-v)}  
\end{align*}
where $\displaystyle a= \sum_{k=1}^{n-1}\sum_{i=0}^{|N(v)|}(k(k-i)+1)\ncolk_{N(v),i}(G-v,k)$ and
$\displaystyle b= \sum_{k=1}^{n-1}\sum_{i=0}^{|N(v)|}(k-i)\ncolk_{N(v),i}(G-v,k) $.\\

It suffices to show that $a\ncol(G{-}v){-}b\totcol(G{-}v)>0$. Let $\mathcal{P}$ be the set of pairs $(k,i)$ such that $\ncolk_{N(v),i}(G-v,k){>}0$. Since $\chrom(G[N(v)]) \geq |N(v)| - 3$, we have $k\geq i\geq |N(v)|-3$ for all $(k,i)\in\mathcal{P}$.
For two pairs $(k,i)$ and $(k',i')$ in $\mathcal{P}$, we write $(k,i){>}(k',i')$ if $k{>}k'$ or $k{=}k'$ and $i{>}i'$. Also, let $f(k,k',i,i')=\ncolk_{N(v),i}(G{-}v,k)\ncolk_{N(v),i'}(G{-}v,k')$. We have
\begin{align*}
& \quad a\ncol(G{-}v)-b\totcol(G{-}v) \\
=&\quad a\sum_{k=1}^{n-1}\sum_{i=0}^{|N(v)|}\ncolk_{N(v),i}(G-v,k)-b\sum_{k=1}^{n-1}\sum_{i=0}^{|N(v)|}k\ncolk_{N(v),i}(G-v,k)\\=&
\sum_{(k,i)\in \mathcal{P}}\ncolk_{N(v),i}(G-v,k)^2
\Bigl({(k(k-i)+1)-(k-i)k}\Bigr)\\ &\quad+\sum_{(k,i)>(k',i')}f(k,k',i,i')\Bigl({(k(k{-}i){+}1){+}(k'(k'{-}i'){+}1){-}(k{-}i)k'{-}(k'{-}i')k}\Bigr)\\
=&\quad\sum_{(k,i)\in \mathcal{P}}\ncolk_{N(v),i}(G{-}v,k)^2{+}
\sum_{(k,i){>}(k',i')}f(k,k',i,i')\Bigl({(k{-}k')^2{-}(k{-}k')(i{-}i'){+}2}\Bigr).
\end{align*}
Note that $\mathcal{P}\neq \emptyset$ since $\ncolk_{N(v),|N(v)|}(G-v,n-1)=1$. Hence,  $\sum_{(k,i)\in \mathcal{P}}\ncolk_{N(v),i}(G-v,k)^2{>}0$, and it is  sufficient to prove that $(k-k')^2-(k-k')(i-i')+2\geq 0$
for every two pairs $(k,i)$ and $(k',i')$ in $\mathcal{P}$ with $(k,i){>}(k',i')$. For two such pairs $(k,i)$ and $(k',i')$, we have $i-i'\leq |N(v)|-(|N(v)|-3)=3$. Hence,
\begin{itemize}\setlength\itemsep{-0.2em}
	\item if $k = k'$, then $(k-k')^2-(k-k')(i-i')+2 = 2 > 0$;
	\item if $k = k' + 1$, then $(k-k')^2-(k-k')(i-i')+2 = 3-(i-i') \geq 0$;
	\item if $k = k' + 2$, then $(k-k')^2-(k-k')(i-i')+2 = 6-2(i-i') \geq 0$;
	\item if $k \geq k' + 3$, then $(k-k')^2-(k-k')(i-i')+2 \geq 2$.\qedhere
\end{itemize}
\end{proof}

\begin{cor}\label{cor:5}
If $v$ is a vertex of degree at most 4 in a graph $G$, then $\avcol(G) > \avcol(G-v)$.
\end{cor}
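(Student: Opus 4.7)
The plan is to derive this corollary as a direct application of Theorem \ref{thm:xhi}. The hypothesis of that theorem is $\chrom(G[N(v)]) \geq |N(v)| - 3$, so it suffices to verify, for every graph $G$ and every vertex $v$ with $|N(v)| \leq 4$, that the induced subgraph on the neighborhood satisfies this chromatic lower bound.

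To do this, I would set $d = |N(v)|$ and examine the four relevant cases. For $d = 0$, the required inequality becomes $\chrom(G[N(v)]) \geq -3$ and is vacuous. For $d \in \{1, 2, 3\}$, the bound becomes $\chrom(G[N(v)]) \geq d - 3 \leq 0$, which is automatic since any graph has nonnegative chromatic number. The only case that needs a genuine (though trivial) observation is $d = 4$: here we must check $\chrom(G[N(v)]) \geq 1$, which holds because $G[N(v)]$ has at least one vertex and any nonempty graph requires at least one color.

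Since the hypothesis of Theorem \ref{thm:xhi} is satisfied in each case, the conclusion $\avcol(G) > \avcol(G-v)$ follows immediately. There is no real obstacle here; the corollary is essentially a packaging of Theorem \ref{thm:xhi} in the simplest useful regime, trading the somewhat abstract chromatic condition on $G[N(v)]$ for a clean degree bound on $v$. The value of the statement is more in its applicability (for instance, to graphs of small maximum degree such as those with $\Delta(G) \leq 2$ announced in the introduction) than in the depth of its proof.
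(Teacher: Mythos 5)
Your proof is correct and follows the same route as the paper: both verify the hypothesis $\chrom(G[N(v)])\geq |N(v)|-3$ of Theorem \ref{thm:xhi} by noting it is trivial when $N(v)=\emptyset$ and follows from $\chrom(G[N(v)])\geq 1$ otherwise. The paper merely condenses your four degree cases into the two cases $N(v)=\emptyset$ and $N(v)\neq\emptyset$.
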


\begin{proof} Since $|N(v)|\leq 4$, we have:
	\begin{itemize}\setlength\itemsep{-0.2em}
\item if $N(v) = \emptyset$, then $\chrom(G[N(v)]) = 0 > -3 = |N(v)| - 3$;
\item if $N(v) \neq \emptyset$, then $\chrom(G[N(v)]) \geq 1\geq  |N(v)| - 3$.
\end{itemize}
In both cases, we conclude from Theorem \ref{thm:xhi} that $\avcol(G) > \avcol(G-v)$.

\end{proof}

\begin{cor} \label{cor:addLinkedToClique}
Let $v$ be a simplicial vertex in a graph $G$. Then  $\avcol(G) > \avcol(G-v)$.
\end{cor}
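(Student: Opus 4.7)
The plan is to reduce Corollary \ref{cor:addLinkedToClique} directly to Theorem \ref{thm:xhi} by checking that the chromatic condition on $G[N(v)]$ is trivially satisfied whenever $v$ is simplicial.

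First I would recall that, by definition, a vertex $v$ is simplicial precisely when the induced subgraph $G[N(v)]$ is a clique. If $N(v) = \emptyset$, then $\chrom(G[N(v)]) = 0 \geq -3 = |N(v)| - 3$. Otherwise $G[N(v)]$ is a complete graph on $|N(v)|$ vertices, so $\chrom(G[N(v)]) = |N(v)| \geq |N(v)| - 3$. Either way, the hypothesis of Theorem \ref{thm:xhi} holds with plenty of room to spare.

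Having verified the hypothesis, I would invoke Theorem \ref{thm:xhi} to conclude $\avcol(G) > \avcol(G - v)$. There is essentially no obstacle here: the work was already done in the proof of Theorem \ref{thm:xhi}, and the corollary exploits only the very weak bound $\chrom(G[N(v)]) \geq |N(v)| - 3$ which holds in the much stronger form $\chrom(G[N(v)]) = |N(v)|$ for a simplicial vertex. In fact this argument parallels the proof of Corollary \ref{cor:5}, the only change being the justification of the chromatic-number inequality (clique structure instead of a bound on the degree).
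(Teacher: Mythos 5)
Your proof is correct and follows the same route as the paper: verify that a simplicial vertex satisfies $\chrom(G[N(v)]) = |N(v)| \geq |N(v)| - 3$ and apply Theorem \ref{thm:xhi}. The only cosmetic difference is your separate treatment of the case $N(v) = \emptyset$, which is subsumed by the general identity $\chrom(G[N(v)]) = |N(v)|$ anyway.
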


\begin{proof}
Since $v$ is simplicial in $G$, we have $\chrom(G[N(v)])=|N(v)|>|N(v)|-3$. Hence, we conclude from Theorem \ref{thm:xhi} that  $\avcol(G) > \avcol(G-v)$.

\end{proof}

\begin{thm} \label{thm:removeSimplicialEdge}
Let $v$ be a simplicial vertex of degree at least one in a graph $G$ of order $n$, and let $w$ be one of its neighbors in $G$.
Then $\avcol(G)>\avcol(G-vw)$.
\end{thm}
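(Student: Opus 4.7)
The plan is to apply the deletion-contraction rule~(\ref{rec_minus}) to the edge $vw$ in order to reduce the target inequality to one that Corollary~\ref{cor:addLinkedToClique} already provides. From~(\ref{rec_minus}) we have $\ncol(G-vw) = \ncol(G)+\ncol(\merge{G}{vw})$ and $\totcol(G-vw) = \totcol(G)+\totcol(\merge{G}{vw})$, so
\[
\avcol(G-vw) = \frac{\totcol(G)+\totcol(\merge{G}{vw})}{\ncol(G)+\ncol(\merge{G}{vw})}
\]
is a convex combination of $\avcol(G)$ and $\avcol(\merge{G}{vw})$ with strictly positive weights $\ncol(G)$ and $\ncol(\merge{G}{vw})$. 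Therefore, proving the strict inequality $\avcol(G)>\avcol(\merge{G}{vw})$ will be enough: it forces $\avcol(G-vw)$ to lie strictly between the two averages, and in particular strictly below $\avcol(G)$.

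Next I would establish the key observation that $\merge{G}{vw}\simeq G-v$. Because $v$ is simplicial in $G$, its neighborhood $N(v)$ is a clique, so every vertex of $N(v)\setminus\{w\}$ is adjacent to $w$; equivalently, $N(v)\setminus\{w\}\subseteq N(w)$. Hence the neighborhood of the merged vertex in $\merge{G}{vw}$, namely $(N(v)\cup N(w))\setminus\{v,w\}$, collapses to $N(w)\setminus\{v\}$, which is precisely the neighborhood of $w$ in $G-v$. Since all edges disjoint from $\{v,w\}$ are unaffected by the merge, the natural bijection identifying the merged vertex with $w$ is an isomorphism $\merge{G}{vw}\to G-v$, and in particular $\avcol(\merge{G}{vw})=\avcol(G-v)$.

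To finish, I would apply Corollary~\ref{cor:addLinkedToClique} to the simplicial vertex $v$ in $G$ to obtain $\avcol(G)>\avcol(G-v)=\avcol(\merge{G}{vw})$, and combine this with the reduction of the first paragraph to conclude $\avcol(G)>\avcol(G-vw)$. The only non-routine step is the isomorphism $\merge{G}{vw}\simeq G-v$: this is where the simpliciality of $v$ (together with the membership $w\in N(v)$) is used essentially, and it is the point where the hypothesis of the theorem really comes in. Everything else is bookkeeping on top of the deletion-contraction identities and of the previously established monotonicity under deletion of a simplicial vertex.
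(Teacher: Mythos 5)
Your proof is correct, but it takes a genuinely different route from the paper's. You contract the edge $vw$, observe that simpliciality of $v$ forces $N(v)\setminus\{w\}\subseteq N(w)$ and hence $\merge{G}{vw}\simeq G-v$, and then conclude via the mediant inequality: since $\ncol(G-vw)=\ncol(G)+\ncol(\merge{G}{vw})$ and $\totcol(G-vw)=\totcol(G)+\totcol(\merge{G}{vw})$ by (\ref{rec_minus}), the quantity $\avcol(G-vw)$ lies strictly between $\avcol(\merge{G}{vw})=\avcol(G-v)$ and $\avcol(G)$, and Corollary~\ref{cor:addLinkedToClique} applied to $v$ in $G$ supplies the needed strict ordering. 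The paper instead works through Lemma~\ref{lem:AddNode}: it introduces the auxiliary graph $H=(G-v)\cup\K{1}$, uses the fact that $\ncolk_{N(v),i}(G-v,k)$ vanishes unless $i=|N(v)|$ to write $\ncol(G)=\ncol(H)-|N(v)|\,\ncol(G-v)$ and $\ncol(G-vw)=\ncol(H)-(|N(v)|-1)\,\ncol(G-v)$ (and likewise for $\totcol$), and then reduces the sign of $\avcol(G)-\avcol(G-vw)$ to $\totcol(H)\ncol(G-v)-\ncol(H)\totcol(G-v)>0$, which it gets from Corollary~\ref{cor:addLinkedToClique} applied to the isolated vertex of $H$. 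Both arguments ultimately rest on the same corollary, but applied to different vertices in different graphs. Your version is shorter and more conceptual, and it isolates the one place where simpliciality is really used (the isomorphism $\merge{G}{vw}\simeq G-v$); the paper's computation is heavier but exhibits $\ncol(G)$ and $\ncol(G-vw)$ as successive points of an explicit affine family indexed by the degree of $v$, a description that meshes with how Lemma~\ref{lem:AddNode} is reused elsewhere in the paper.
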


\begin{proof}
Let $H=(G-v)\cup\K{1}$. In other words, $H$ is obtained from $G-v$ by adding an isolated vertex. It follows from Lemma~\ref{lem:AddNode} that 
	\begin{align*}\ncol(H) &= 
	\ncol(G-v)+
	\sum_{k=1}^{n-1}\sum_{i=0}^{0}(k-i)\ncolk_{\emptyset,i}(G-v,k)\\
	&=\ncol(G-v) + \sum_{k=1}^{n-1}k\ncolk(G-v,k)\end{align*}and
	\begin{align*}\totcol(H) &= 
	\totcol(G-v)+\sum_{k=1}^{n-1}\sum_{i=0}^{0}(k(k-i)+1)\ncolk_{\emptyset,i}(G-v,k)\\&=
	\totcol(G-v) + \sum_{k=1}^{n-1}(k^2+1)\ncolk(G-v,k).
	\end{align*}

Also, since $\ncolk_{N(v),i}(G{-}v,k){=}0$ for $i\neq |N(v)|$,  we have $\ncolk(G{-}v,k){=}\ncolk_{N(v),|N(v)|}(G{-}v,k)$ and it follows from Lemma~\ref{lem:AddNode} that 
\begin{align*}\ncol(G) &= \ncol(G-v) + \sum_{k=1}^{n-1}(k-|N(v)|)\ncolk(G-v,k)\\
&=\Big(\ncol(G-v) + \sum_{k=1}^{n-1}k\ncolk(G-v,k)\Big)-|N(v)|\sum_{k=1}^{n-1}\ncolk(G-v,k)\\
&=\ncol(H)-|N(v)|\ncol(G-v)\end{align*}and
\begin{align*}\totcol(G) &= \totcol(G-v) + \sum_{k=1}^{n-1}\Big(k(k-|N(v)|)+1\Big)\ncolk(G-v,k)\\
&= \Big(\totcol(G-v) + \sum_{k=1}^{n-1}(k^2+1)\ncolk(G-v,k)\Big)
-|N(v)|\sum_{k=1}^{n-1}
k\ncolk(G-v,k)\\
&=\totcol(H)-|N(v)|\totcol(G-v).\end{align*}
Similarly, since $v$ is simplicial (of degree $|N(v)|-1$) in $G-vw$, we have 
$$\ncol(G-vw)=\ncol(H)-(|N(v)|-1)\ncol(G-v)$$and
	$$\totcol(G-vw)=\totcol(H)-(|N(v)|-1)\totcol(G-v).$$
Hence,
\begin{align*}
\avcol(G)-\avcol(G-vw)&=
\frac{\totcol(G)}{\ncol(G)}-\frac{\totcol(G-vw)}{\ncol(G-vw)}\\
&=
\frac{\totcol(H){-}|N(v)|\totcol(G{-}v)}{\ncol(H){-}|N(v)|\ncol(G{-}v)}{-}\frac{\totcol(H){-}(|N(v)|{-}1)\totcol(G{-}v)}{\ncol(H){-}(|N(v)|{-}1)\ncol(G{-}v)}\\&=\frac{\totcol(H)\ncol(G{-}v){-}\ncol(H)\totcol(G{-}v)}{\ncol(G)\ncol(G{-}vw)}.
\end{align*}
Since $v$ is isolated in $H$, it is simplicial and we know from Corollary \ref{cor:addLinkedToClique} that
\begin{align*}\avcol(H)>\avcol(G-v)&\iff\frac{\totcol(H)}{\ncol(H)} > \frac{\totcol(G-v)}{\ncol(G-v)}\\& \iff \totcol(H)\ncol(G-v) - \ncol(H)\totcol(G-v) > 0
\end{align*}
which implies $\avcol(G)-\avcol(G-vw)>0$.

\end{proof}

\begin{lem} \label{lem:colUnion}

Let $G$ and $H$ be two graphs and suppose $H$ has order $n$. Then
\begin{itemize}
    \item $\ncol(G \cup H) = \displaystyle\sum_{k=1}^{n} \ncolk(H,k) \ncol(G \cup \K{k})$, and
    \item $\totcol(G \cup H) = \displaystyle\sum_{k=1}^{n} \ncolk(H,k) \totcol(G \cup \K{k})$.
\end{itemize}

\end{lem}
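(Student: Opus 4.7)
The plan is to stratify the colorings of $G \cup H$ by the coloring they induce on $V(H)$, and, for each coloring of $H$ that uses exactly $k$ colors, set up a bijection between the possible extensions to $V(G)$ and the non-equivalent colorings of $G \cup \K{k}$.

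Concretely, I would first fix a coloring of $H$ with color classes $B_1,\ldots,B_k$. Because $G \cup H$ contains no edge between $V(G)$ and $V(H)$, any coloring of $G \cup H$ that restricts to this coloring on $V(H)$ has color classes of exactly two types: classes of the form $B_i \cup S_i$ with $S_i$ a (possibly empty) stable set of $G$, and classes $T_1,\ldots,T_j$ that are stable sets contained in $V(G)$. I would then label the vertices of $\K{k}$ as $u_1,\ldots,u_k$ and define a map sending such an extension to the partition $\{u_1\}\cup S_1, \ldots, \{u_k\}\cup S_k, T_1,\ldots,T_j$ of $V(G)\cup V(\K{k})$. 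This is a coloring of $G \cup \K{k}$ since the $u_i$ lie in distinct classes, each $\{u_i\}\cup S_i$ is stable (no $G$–$\K{k}$ edges), and each $T_\ell$ is stable in $G$. Conversely, in any coloring of $G\cup\K{k}$ the clique $\K{k}$ forces the $u_i$ to lie in $k$ distinct classes, so replacing each $u_i$ with $B_i$ recovers a unique extension. Hence the map is a bijection, and different colorings of $H$ give different restrictions to $V(H)$ so no double counting occurs.

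To finish, I would observe two things. First, the bijection preserves the total number of color classes: an extension with $k+j$ classes corresponds to a coloring of $G \cup \K{k}$ with $k+j$ classes. Second, the number of extensions, summed over all colorings of $H$ with $k$ classes, contributes $\ncolk(H,k)\cdot\ncol(G\cup\K{k})$ to $\ncol(G\cup H)$, while summing the class counts contributes $\ncolk(H,k)\cdot\totcol(G\cup\K{k})$ to $\totcol(G\cup H)$. Summing $k$ from $1$ to $n$ yields the two desired identities.

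The only delicate point is the non-equivalence bookkeeping, i.e.\ making sure that distinct unordered partitions of $V(G\cup H)$ correspond to distinct pairs (coloring of $H$, coloring of $G\cup\K{k}$). This comes down to noting that the restriction of a partition of $V(G\cup H)$ to $V(H)$ is canonically defined, so two colorings of $G\cup H$ with different restrictions are counted under different colorings of $H$, while two with the same restriction correspond, under the bijection above, to two different colorings of $G\cup\K{k}$. Once that is in hand, the computation is purely bookkeeping and no further calculation is needed.
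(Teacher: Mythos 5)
Your proof is correct, but it follows a genuinely different route from the one in the paper. You argue directly and bijectively: you stratify the colorings of $G \cup H$ by their (canonically defined) restriction to $V(H)$, and for each coloring of $H$ with classes $B_1,\ldots,B_k$ you exhibit a class-count-preserving bijection between its extensions to $G\cup H$ and the colorings of $G\cup\K{k}$, exploiting the fact that the clique forces $u_1,\ldots,u_k$ into $k$ distinct classes just as the $B_i$ are. The one delicate point --- fixing the correspondence $B_i\leftrightarrow u_i$ once and for all so that distinct extensions map to distinct partitions of $V(G\cup\K{k})$ and every such partition is hit exactly once --- you identify and handle correctly, and the same bijection delivers both identities at once since it preserves the number of color classes. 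The paper instead proceeds by double induction on the order and size of $H$ using the deletion--contraction identities (\ref{rec_plus}): when $H$ is complete the claim is immediate because $\ncolk(\K{n},k)$ vanishes unless $k=n$, and otherwise $\ncol(G\cup H)=\ncol(G\cup(H+uv))+\ncol(G\cup\merge{H}{uv})$ lets the induction hypothesis be applied to both terms, with the recurrence for $\ncolk(H,\cdot)$ reassembling the coefficients; the $\totcol$ case is declared similar. Your argument is more self-contained and explains combinatorially why $\K{k}$ is the right surrogate for a $k$-class coloring of $H$, whereas the paper's argument is shorter on the page because the deletion--contraction machinery is already set up and is the uniform tool used throughout the rest of the paper.
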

\begin{proof}
We first prove that $\ncol(G \cup H) = \sum_{k=1}^{n} \ncolk(H,k) \ncol(G \cup \K{k})$ for all graphs $H$ of order $n$. For $n=1$, we have $H = \K{1}$, and since $\ncolk(\K{1},1)=1$, we have $\ncol(G \cup \K{1}) = \sum_{k=1}^{1} \ncolk(\K{1},k) \ncol(G \cup \K{1})$. For larger values of $n$ we proceed by double induction on the order $n$ and the size $m$ of $H$. So assume $H$ has order $n$ and size $m$.
\begin{itemize}\setlength\itemsep{-0.1em}
	\item If $m=\frac{n(n-1)}{2}$, then $H=\K{n}$. Since $\ncolk(\K{n},i)=0$ for $i=1,\ldots,n-1$ and $\ncolk(\K{n},n)=1$, we have
	$\ncol(G \cup \K{n}) = \sum_{k=1}^{n} \ncolk(\K{n},k) \ncol(G \cup \K{n})$.
	\item If $m<\frac{n(n-1)}{2}$, then $H$ contains two non-adjacent vertices $u$ and $v$ and we know from Equations (\ref{rec_plus}) that $\ncol(G \cup H)=\ncol(G\cup(H + uv)) + \ncol(G\cup \merge{H}{uv})$. Since $G\cup(H + uv)$ has order $n$ and size $m+1$ and $G\cup(\merge{H}{uv})$ has order $n-1$, we know by induction that
	\begin{align*}
	\ncol(G \cup H)&=\sum_{k=1}^{n} \ncolk(H + uv,k) \ncol(G \cup \K{k}) + \sum_{k=1}^{n-1} \ncolk(\merge{H}{uv},k) \ncol(G \cup \K{k})\\
	&=\sum_{k=1}^{n} \bigl(\ncolk(H + uv,k) +\ncolk(\merge{H}{uv},k)\bigr) \ncol(G \cup \K{k})\\&=\sum_{k=1}^{n} \ncolk(H,k) \ncol(G \cup \K{k}).
	\end{align*}
\end{itemize}
\vspace{-0.3cm}The proof for $\totcol(G \cup H)$ is similar.

\end{proof}

\begin{thm} \label{thm:crossProductAvCol}
Let $H_1,H_2$ be any two graphs. If $\ncolk(H_1,k) \ncolk(H_2,k') \geq \ncolk(H_2,k)\ncolk(H_1,k')$ for all $k {>} k'$, the inequality being strict for at least one pair $(k,k')$, then\linebreak[4] ${\avcol(G\cup H_1) > \avcol(G\cup H_2)}$ for all graphs $G$.
\end{thm}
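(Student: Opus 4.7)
The plan is to invoke Lemma~\ref{lem:colUnion} to linearize both $\ncol(G\cup H_i)$ and $\totcol(G\cup H_i)$ against the simpler quantities $a_k=\ncol(G\cup\K{k})$ and $b_k=\totcol(G\cup\K{k})$. Writing $S_i(k)$ as shorthand for $\ncolk(H_i,k)$, the lemma gives $\ncol(G\cup H_i)=\sum_k S_i(k)a_k$ and $\totcol(G\cup H_i)=\sum_k S_i(k)b_k$. The desired inequality $\avcol(G\cup H_1)>\avcol(G\cup H_2)$ is equivalent to
$$\totcol(G\cup H_1)\ncol(G\cup H_2)-\totcol(G\cup H_2)\ncol(G\cup H_1)>0,$$
which I will expand as a double sum in $k,k'$ and analyse.

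The next step is a standard symmetrization. Expanding the two products and pairing the $(k,k')$ and $(k',k)$ contributions, the diagonal $k=k'$ vanishes and the difference collapses to
$$\sum_{k>k'}\bigl(S_1(k)S_2(k')-S_2(k)S_1(k')\bigr)\bigl(b_k a_{k'}-b_{k'}a_k\bigr).$$
By hypothesis the first factor is non-negative for every $k>k'$ and strictly positive for at least one such pair. The proof therefore reduces to showing that $b_k a_{k'}-b_{k'}a_k>0$ for every $k>k'$, or equivalently that $\avcol(G\cup\K{k})$ is strictly increasing in $k$ (noting that $a_k,b_k>0$).

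The monotonicity is the crux of the argument, and I would deduce it directly from Corollary~\ref{cor:addLinkedToClique}. Indeed, $G\cup\K{k+1}$ is obtained from $G\cup\K{k}$ by adding a single vertex $v$ whose neighborhood is precisely the clique $\K{k}$, so $v$ is simplicial in $G\cup\K{k+1}$; the corollary then yields $\avcol(G\cup\K{k+1})>\avcol(G\cup\K{k})$. Iterating gives $\avcol(G\cup\K{k})>\avcol(G\cup\K{k'})$ whenever $k>k'$, and hence $b_k a_{k'}>b_{k'}a_k$.

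I do not anticipate a genuine obstacle beyond careful bookkeeping. Lemma~\ref{lem:colUnion} performs the reduction to the clique benchmarks $G\cup\K{k}$, Corollary~\ref{cor:addLinkedToClique} supplies the strict monotonicity of $\avcol(G\cup\K{k})$ in $k$, and the pair where the hypothesis is strict then contributes a strictly positive term to the symmetrized sum while every other term remains non-negative. The only point that warrants a sanity check is that the strict inequality in the hypothesis and the strict monotonicity of the clique benchmarks combine on the \emph{same} kind of pair, namely pairs with $k>k'$, which is exactly how the statement is set up.
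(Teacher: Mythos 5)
Your proposal is correct and follows essentially the same route as the paper: both linearize via Lemma~\ref{lem:colUnion}, symmetrize the cross-product difference into a sum over pairs $k>k'$, and obtain the strict positivity of $\totcol(G\cup\K{k})\ncol(G\cup\K{k'})-\totcol(G\cup\K{k'})\ncol(G\cup\K{k})$ from Corollary~\ref{cor:addLinkedToClique} by adding simplicial vertices to the clique component. The only detail the paper adds is the observation that the hypothesis forces $n_1\ge n_2$ so the summation ranges align, which is minor bookkeeping.
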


\begin{proof}
Let $f(k,k'){=}\ncolk(H_1,k) \ncolk(H_2,k') {-} \ncolk(H_2,k)\ncolk(H_1,k')$ and assume that $H_1$ and $H_2$ are of order $n_1$ and $n_2$, respectively. Note that $n_1\geq n_2$ else we would have $n_2>n_1$ and $f(n_2,n_1){=}\ncolk(H_1,n_2) \ncolk(H_2,n_1) {-} \ncolk(H_2,n_2)\ncolk(H_1,n_1)=-1<0$. We know from Lemma \ref{lem:colUnion} that
\begin{align*}
\avcol(G\cup H_1) {-} \avcol(G\cup H_2) &= \frac{\displaystyle\sum_{k=1}^{n_1}\ncolk(H_1,k)\totcol(G \cup \K{k})}{\displaystyle\sum_{k=1}^{n_1}\ncolk(H_1,k)\ncol(G \cup \K{k})} - \frac{\displaystyle\sum_{k=1}^{n_2}\ncolk(H_2,k)\totcol(G \cup \K{k})}{\displaystyle\sum_{k=1}^{n_2}\ncolk(H_2,k)\ncol(G \cup \K{k})}\\
&=\frac{\displaystyle\sum_{k=1}^{n_1}\sum_{k'=1}^{n_1}f(k,k')\totcol(G {\cup}\K{k})\ncol(G{\cup}\K{k'})}{\ncol(G\cup H_1)\ncol(G\cup H_2)}.
\end{align*}
Since $f(k,k)=0$ for all $k$ and $f(k,k')=-f(k',k)$ for all $k\neq k'$, we deduce 
\begin{align*}
\avcol(G\cup H_1) {-} \avcol(G\cup H_2) &=\frac{\displaystyle\sum_{k'=1}^{n_1-1}\sum_{k=k'{+}1}^{n_1}f(k,k')\Bigl(\totcol(G {\cup}\K{k})\ncol(G{\cup}\K{k'}){-}\totcol(G{\cup}\K{k'})\ncol(G{\cup}\K{k})\Bigr)}{\ncol(G\cup H_1)\ncol(G\cup H_2)}.
\end{align*}
Note that if $k>k'$, then $G\cup \K{k}$ is obtained from $G\cup \K{k'}$ by repeatedly adding a simplicial vertex. Hence, we know from Corollary \ref{cor:addLinkedToClique} that
\begin{align*}
\avcol(G\cup \K{k})>\avcol(G\cup \K{k'})
&\iff \frac{\totcol(G\cup \K{k})}{\ncol(G\cup \K{k})}>
\frac{\totcol(G\cup \K{k'})}{\ncol(G\cup \K{k'})}\\
&\iff 
\totcol(G \cup \K{k})\ncol(G \cup \K{k'})-\totcol(G \cup \K{k'})\ncol(G \cup \K{k})>0.
\end{align*}
Since $f(k,k')=\ncolk(H_1,k)\ncolk(H_2,k')-\ncolk(H_1,k')\ncolk(H_2,k)$ is positive for all $(k,k')$, and strictly positive for at least one such pair, we have $\avcol(G\cup H_1) - \avcol(G\cup H_2)> 0$.

\end{proof}

As a final property, we mention one which is proved in \cite{Hertz21} and which will be helpful in proving results in the following sections.

\begin{thm}[\cite{Hertz21}] \label{thm:pro_avcolInfSum}  Let $G, H$ and $F_1,\cdots,F_r$ be $r+2$ graphs, and let $\alpha_1,\cdots,\alpha_r$ be $r$ positive numbers such that
	\begin{itemize}\setlength\itemsep{0.1em}
		\item $\displaystyle\ncol(G)=\ncol(H)+\sum_{i=1}^r\alpha_i\ncol(F_i)$, 
		\item $\displaystyle\T(G)=\T(H)+\sum_{i=1}^r\alpha_i\T(F_i)$, and
		\item $\displaystyle\A(F_i)<\A(H)$ for all $i=1,\cdots,r$.
	\end{itemize}
	Then $\A(G)<\A(H)$.
\end{thm}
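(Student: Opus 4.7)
The plan is to reduce the claim to a direct algebraic computation, essentially a weighted generalization of the classical mediant inequality: if $a_i/b_i < c/d$ for all $i$ with $b_i,d>0$ and weights $\alpha_i>0$, then $(c+\sum_i\alpha_i a_i)/(d+\sum_i\alpha_i b_i) < c/d$. Here the roles of $c/d$ and $a_i/b_i$ are played by $\T(H)/\ncol(H)=\A(H)$ and $\T(F_i)/\ncol(F_i)=\A(F_i)$ respectively, and the first two hypotheses tell us exactly that $\T(G)/\ncol(G)$ is the weighted mediant.

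More concretely, I would start from
\[
\A(H) - \A(G) \;=\; \frac{\T(H)}{\ncol(H)} - \frac{\T(H)+\sum_{i=1}^r \alpha_i\,\T(F_i)}{\ncol(H)+\sum_{i=1}^r \alpha_i\,\ncol(F_i)},
\]
put the two fractions over a common denominator, and observe that the $\T(H)\ncol(H)$ terms cancel. The numerator then simplifies to
\[
\sum_{i=1}^r \alpha_i\bigl(\T(H)\ncol(F_i)-\ncol(H)\T(F_i)\bigr) \;=\; \sum_{i=1}^r \alpha_i\,\ncol(H)\,\ncol(F_i)\,\bigl(\A(H)-\A(F_i)\bigr),
\]
while the denominator is $\ncol(H)\bigl(\ncol(H)+\sum_i\alpha_i\ncol(F_i)\bigr)=\ncol(H)\,\ncol(G)$ by the first hypothesis.

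To conclude, I would note that $\ncol(H)>0$ and $\ncol(G)>0$ (these invariants are positive on any graph), each $\alpha_i$ is positive by assumption, each $\ncol(F_i)>0$, and each factor $\A(H)-\A(F_i)>0$ by the third hypothesis. Hence every term in the sum is nonnegative and the sum is strictly positive (provided $r\geq 1$, which is implicit since the $F_i$ are among the $r+2$ graphs of the statement), so $\A(H)-\A(G)>0$ as required.

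There is no real obstacle here; the only things to check are the sign of the denominator $\ncol(G)$ (immediate, since $\ncol$ of any graph is at least $1$) and the bookkeeping in the factorization of $\T(H)\ncol(F_i)-\ncol(H)\T(F_i)$, which is just the standard rewriting of a cross-multiplication as a difference of averages multiplied by the product of denominators.
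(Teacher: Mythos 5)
Your proof is correct: the numerator computation $\T(H)\ncol(G)-\T(G)\ncol(H)=\sum_{i=1}^r\alpha_i\,\ncol(H)\ncol(F_i)\bigl(\A(H)-\A(F_i)\bigr)$ is exactly right, and the positivity of $\ncol$ on every graph together with $\alpha_i>0$ and the third hypothesis gives strict positivity of the whole expression. The paper itself does not reprove this statement (it is quoted from the cited reference), but the weighted-mediant argument you give is the standard and evidently intended one, and it is complete as written.
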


\section{Some values for $\A(G)$}\label{sec:values}

The value $\avcol(G)$ is known for some graphs $G$. 
We mention here some of them which are proven in \cite{Hertz21} and determine some others.

\begin{prop}\label{avcol-kn}\cite{Hertz21}
\begin{itemize}\setlength\itemsep{-0.5em}
	\item $\displaystyle\avcol(\E{n}) = \avcol(n\K{1})=\frac{\B{n+1} - \B{n}}{\B{n}}$ for all $n\geq 1$;
		\item $\displaystyle \avcol(T\cup p\K{1})=
	\frac{\displaystyle\sum_{i = 0}^{p} {p \choose i} \B{n+i}}
	{\displaystyle\sum_{i = 0}^{p} {p \choose i} \B{n+i-1}}$
	for all trees $T$ of order $n\geq 1$ and all $p\geq 0$;
	\item $\displaystyle\avcol(\C{n}\cup p\K{1}) = \frac{\displaystyle\sum_{j=1}^{n-1}(-1)^{j+1}\sum_{i = 0}^{p} {p \choose i} \B{n+i-j+1}}{\displaystyle\sum_{j=1}^{n-1}(-1)^{j+1}\sum_{i = 0}^{p} {p \choose i} \B{n+i-j}}$ for all $n\geq 3$ and $p\geq 0$.
\end{itemize}
\end{prop}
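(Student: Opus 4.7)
My plan is to compute $\ncolk(G,k)$ in closed form for each relevant $G$ and then sum over $k$ with weights $1$ and $k$ to obtain $\ncol(G)$ and $\totcol(G)$. The central arithmetic tool is the classical identity $\TBell{m} = \B{m+1}-\B{m}$ (proved by noting that a partition of $[m+1]$ either places $m+1$ in a new block or inserts it into one of the blocks of a partition of $[m]$), which rearranges to $\sum_{k\geq 1} k\,\stirling{m}{k-1} = \B{m+1}$ and $\sum_{k\geq 1}\stirling{m}{k-1} = \B{m}$.

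For the first item, $\avcol(\E{n}) = \TBell{n}/\B{n}$ is immediate from the definitions of $\B{n}$ and $\TBell{n}$, and the identity above finishes the proof.

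For the second item, I would first prove by induction on $n$ that $\ncolk(T,k) = \stirling{n-1}{k-1}$ for every tree $T$ of order $n$. Applying the deletion-contraction rule~(\ref{rec_minus}) to a pendant edge $uv$ (with $v$ a leaf) gives $\ncolk(T,k) = \ncolk((T-v)\cup\K{1},k) - \ncolk(T-v,k)$; combined with the elementary ``add an isolated vertex'' formula $\ncolk(G\cup\K{1},k) = k\,\ncolk(G,k) + \ncolk(G,k-1)$ (the new vertex either joins one of the $k$ existing color classes or opens a fresh one) and the Stirling recurrence $\stirling{n-1}{k-1} = (k-1)\stirling{n-2}{k-1}+\stirling{n-2}{k-2}$, the induction goes through. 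I would then prove an isolated-vertex lemma by induction on $p$: if $\ncolk(G,k) = \stirling{m}{k-1}$ for some fixed $m$, then $\ncolk(G\cup p\K{1},k) = \sum_{i=0}^p\binom{p}{i}\stirling{m+i}{k-1}$; the inductive step combines Pascal's rule $\binom{p}{i} = \binom{p-1}{i}+\binom{p-1}{i-1}$ with the Stirling recurrence. Applying this with $m=n-1$ and summing over $k$ with weights $1$ and $k$ produces the claimed ratio.

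For the third item, I would apply deletion-contraction to an edge $uv$ of $\C{n}$, using $\C{n}-uv = \Path{n}$ and $\merge{\C{n}}{uv} = \C{n-1}$, to obtain $\ncolk(\C{n},k) = \ncolk(\Path{n},k) - \ncolk(\C{n-1},k)$. Since the tree formula from part two gives $\ncolk(\Path{n},k) = \stirling{n-1}{k-1}$, unrolling the recurrence down to $\C{3}\simeq\K{3}$ yields $\ncolk(\C{n},k) = \sum_{j=1}^{n-1}(-1)^{j+1}\stirling{n-j}{k-1}$. The isolated-vertex lemma from part two applies term by term, giving $\ncolk(\C{n}\cup p\K{1},k) = \sum_{j=1}^{n-1}(-1)^{j+1}\sum_{i=0}^{p}\binom{p}{i}\stirling{n-j+i}{k-1}$. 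Summing over $k$ with weights $1$ and $k$ produces the two alternating Bell-number sums in the statement.

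The main obstacle is not conceptual depth but index bookkeeping: the interaction between the alternating sum coming from the cycle recurrence and the binomial expansion coming from the isolated vertices is where mistakes are easiest to make. The one genuinely nontrivial ingredient is the inductive step of the isolated-vertex lemma, where Pascal's rule and the Stirling recurrence have to be combined in exactly the right way; once that is in place, both parts two and three reduce to straightforward summations using the rearranged identity $\sum_{k\geq 1} k\,\stirling{m}{k-1} = \B{m+1}$.
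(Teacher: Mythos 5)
Your proposal is correct. Note, however, that the paper itself gives no proof of this proposition --- it is imported verbatim from \cite{Hertz21} --- so there is no in-paper argument to compare against; what you have written is a complete, self-contained derivation. All the ingredients check out: the pendant-edge deletion-contraction $\ncolk(T,k)=\ncolk((T-v)\cup\K{1},k)-\ncolk(T-v,k)$ together with $\ncolk(G\cup\K{1},k)=k\,\ncolk(G,k)+\ncolk(G,k-1)$ and the Stirling recurrence does give $\ncolk(T,k)=\stirlings{n-1}{k-1}$ for every tree of order $n$; the isolated-vertex lemma $\ncolk(G\cup p\K{1},k)=\sum_{i=0}^{p}\binom{p}{i}\stirlings{m+i}{k-1}$ follows by the Pascal/Stirling combination you describe (and, since adding isolated vertices acts linearly on the sequence $(\ncolk(\cdot,j))_j$, it indeed applies term by term to the alternating cycle expansion); and the final summations via $\sum_{k\ge 1}\stirlings{m}{k-1}=\B{m}$ and $\sum_{k\ge 1}k\,\stirlings{m}{k-1}=\TBell{m}+\B{m}=\B{m+1}$ reproduce exactly the three stated ratios, including the base case $\ncolk(\C{3},k)=\stirlings{2}{k-1}-\stirlings{1}{k-1}$ that anchors the unrolled cycle recurrence.
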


Since $\ncolk(\K{n},k)=1$ for $k=n$, and $\ncolk(\K{n},k)=0$ for $k<n$, we have $\avcol(\K{n})=n$. We prove here a stronger property which we use in the next section. Let $\stirlings{a}{b}$ be the Stirling number of the second kind, with parameters $a$ and $b$ (i.e., the number of partitions of a set of $a$ elements into $b$ blocks).
\begin{prop}\label{prop:KnpK1}
	$ $
	\newline
	$\quad\displaystyle\avcol(\K{n}\cup p\K{1})=\frac
	{\displaystyle\sum_{k=n}^{n+p}k\sum_{j=0}^n{k-j \choose n-j}{n \choose j} (n-j)!\stirlings{p}{k-j}}
	{\displaystyle\sum_{k=n}^{n+p}\sum_{j=0}^n{k-j \choose n-j}{n \choose j} (n-j)!\stirlings{p}{k-j}}\quad$ for all $n\geq 1$ and all $p\geq 0$.
	
\end{prop}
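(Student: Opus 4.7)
The plan is to derive the formula by enumerating $\ncolk(\K{n}\cup p\K{1},k)$ combinatorially for each $k$, and then applying the identities $\ncol(G)=\sum_k\ncolk(G,k)$, $\totcol(G)=\sum_k k\,\ncolk(G,k)$ together with $\avcol(G)=\totcol(G)/\ncol(G)$.

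Write $C$ for the vertex set of $\K{n}$ and $I$ for the set of $p$ isolated vertices. A non-equivalent coloring of $\K{n}\cup p\K{1}$ with exactly $k$ colors is a partition of $C\cup I$ into $k$ stable sets; since $C$ induces a clique, every block of such a partition contains at most one vertex of $C$, which forces $n\leq k\leq n+p$ and explains the summation range in the claim. I would classify these partitions by the integer $j\in\{0,\ldots,n\}$ equal to the number of clique vertices that appear in a singleton block (that is, in a block disjoint from $I$). The remaining $n-j$ clique vertices then each share their block with at least one isolated vertex; if $\ell$ denotes the number of blocks meeting $I$, then $n-j$ of these blocks also contain a clique vertex while $\ell-(n-j)$ contain only isolated vertices. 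Since $k=j+\ell$, this yields $\ell=k-j$.

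For fixed $j$, the counting now splits into four independent choices: partition the $p$ isolated vertices into $k-j$ non-empty unordered blocks in $\stirlings{p}{k-j}$ ways; choose which $n-j$ of those $k-j$ blocks are to be augmented by a clique vertex in $\binom{k-j}{n-j}$ ways; choose which $n-j$ of the $n$ clique vertices play that role in $\binom{n}{n-j}=\binom{n}{j}$ ways; and match these chosen clique vertices bijectively with the $n-j$ augmented blocks in $(n-j)!$ ways. Summing over $j$ gives
$$\ncolk(\K{n}\cup p\K{1},k)=\sum_{j=0}^{n}\binom{k-j}{n-j}\binom{n}{j}(n-j)!\stirlings{p}{k-j},$$
and substituting into $\avcol=\totcol/\ncol$ with the factor $k$ for $\totcol$ gives the proposition.

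There is no serious obstacle: once $j$ is fixed, the four steps are independent and each target partition is produced exactly once, so the enumeration is exact rather than up to multiplicity. The one subtlety worth flagging is that the isolated-vertex part of the partition is unordered (which is why $\stirlings{p}{k-j}$ appears), but each augmented block becomes distinguishable once a specific clique vertex is placed in it (which is why we multiply by the injective factor $\binom{k-j}{n-j}(n-j)!$ after selecting the $\binom{n}{j}$ solo clique vertices). Boundary checks such as $p=0$ (only $j=n$, $k=n$ survives, giving $\ncolk=1$ and $\avcol(\K{n})=n$) and the case $n=1$, in which the expression collapses to the classical Stirling recurrence $\stirlings{p+1}{k}=k\stirlings{p}{k}+\stirlings{p}{k-1}$ and hence to $\avcol(\E{p+1})=(\B{p+2}-\B{p+1})/\B{p+1}$ of Proposition~\ref{avcol-kn}, confirm the bookkeeping.
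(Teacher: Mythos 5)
Your proof is correct, and it takes a genuinely different route from the paper. The paper does not enumerate anything directly: it quotes a general identity from the reference [Hertz16] for the number of $k$-colorings of a disjoint union, namely $\ncolk(G_1\cup G_2,k)=\sum_{i=1}^k\sum_{j=0}^i\binom{i}{j}\binom{k-j}{i-j}(i-j)!\,\ncolk(G_1,i)\ncolk(G_2,k-j)$, and then specializes it to $G_1\simeq\K{n}$ (where only $i=n$ contributes) and $G_2\simeq p\K{1}$ (where $\ncolk(p\K{1},k-j)=\stirlings{p}{k-j}$) to obtain exactly the inner sum $\sum_{j=0}^n\binom{k-j}{n-j}\binom{n}{j}(n-j)!\stirlings{p}{k-j}$, after which it divides $\sum_k k\,\ncolk$ by $\sum_k\ncolk$ just as you do. Your argument instead re-derives this special case from scratch by a direct combinatorial count, classifying the partitions by the number $j$ of clique vertices sitting in singleton blocks and then making the four independent choices (Stirling partition of the isolated vertices, choice of augmented blocks, choice of solo clique vertices, bijection); the bookkeeping is sound, each partition is produced exactly once, and the range $n\le k\le n+p$ is correctly justified. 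What the paper's route buys is brevity and reuse of an established lemma; what yours buys is a self-contained, elementary proof that also serves as an independent verification of the [Hertz16] identity in this special case. Your boundary checks ($p=0$ and the $n=1$ collapse to the Stirling recurrence) are a nice touch that the paper omits.
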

\begin{proof}
It is proved in \cite{Hertz16} that given two graphs $G_1$ and $G_2$, we have 
$$\ncolk(G_1\cup G_2,k)=\sum_{i=1}^k\sum_{j=0}^i{i \choose j}{k-j \choose i-j}(i-j)!\ncolk(G_1,i)\ncolk(G_2,k-j).$$
For $G_1\simeq \K{n}$ and $G_2\simeq p\K{1}$, we have $\ncolk(G_1,i)=1$ if $i=n$, and $\ncolk(G_1,i)=0$ otherwise. Also, $\ncolk(G_2,k-j)=\stirlings{p}{k-j}$. Hence, 
$$\ncolk(\K{n}\cup p\K{1},k)=\sum_{j=0}^n{k-j \choose n-j}{n \choose j} (n-j)!\stirlings{p}{k-j}.
$$
The result then follows from the fact that \begin{align*}\avcol(\K{n}\cup p\K{1})=\frac
{\displaystyle\sum_{k=n}^{n+p}k\ncolk(\K{n}\cup p\K{1},k)}
{\displaystyle\sum_{k=n}^{n+p}\ncolk(\K{n}\cup p\K{1},k)}.
\end{align*}
\end{proof}
We now determine $\avcol(G)$ for $G$ equal to the complement of a path and the complement of a cycle. In what follows, we write $F_n$ and $L_n$ for the $n$th Fibonacci number and the $n$th Lucas number, respectively.

\begin{prop} \label{prop:totColFibo}
	$\displaystyle\avcol(\PathBar{n}) =   \frac{(n+1)F_{n+2}+(2n-1)F_{n+1}}{5F_{n+1}}$ for all  $n\geq 1$.
\end{prop}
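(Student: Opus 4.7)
The plan is to exploit the structural observation that a stable set of $\PathBar{n}$ is a clique of $\Path{n}$, and cliques of a path are just singletons and edges. Consequently, the non-equivalent colorings of $\PathBar{n}$ are in bijection with the matchings of $\Path{n}$: a matching $M$ yields a partition consisting of the edges of $M$ (as two-element blocks) together with the unmatched vertices (as singletons), giving exactly $n - |M|$ blocks. Writing $m_k$ for the number of matchings of $\Path{n}$ of size $k$, this gives
\[
\ncol(\PathBar{n}) = \sum_{k\geq 0} m_k, \qquad \totcol(\PathBar{n}) = \sum_{k\geq 0}(n-k)m_k = n\,\ncol(\PathBar{n}) - T_n,
\]
where $T_n := \sum_k k\, m_k$ denotes the sum of sizes of all matchings of $\Path{n}$.

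Next I would condition on whether the edge $\{n-1,n\}$ belongs to the matching. Matchings avoiding it are in bijection with matchings of $\Path{n-1}$ (leave $n$ unmatched), while matchings containing it correspond to matchings of $\Path{n-2}$ (with that extra edge added). This simultaneously yields the recurrences
\[
\ncol(\PathBar{n}) = \ncol(\PathBar{n-1}) + \ncol(\PathBar{n-2}), \qquad T_n = T_{n-1} + T_{n-2} + F_{n-1},
\]
together with $\ncol(\PathBar{1})=1$, $\ncol(\PathBar{2})=2$, $T_1=0$, $T_2=1$. The first recurrence is the Fibonacci one and immediately gives $\ncol(\PathBar{n}) = F_{n+1}$.

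The remaining step is to solve the $T_n$ recurrence, and I would proceed by guessing and then verifying by induction that $T_n = \frac{2n F_{n+1} - (n+1) F_n}{5}$. The base cases $n=1,2$ are direct, and the inductive step reduces, after substitution, to the identity $F_{n+1} = F_n + F_{n-1}$ applied to collect terms. Substituting back yields
\[
\totcol(\PathBar{n}) = n F_{n+1} - T_n = \frac{3n\,F_{n+1} + (n+1) F_n}{5} = \frac{(n+1) F_{n+2} + (2n-1) F_{n+1}}{5},
\]
the last equality using $F_{n+2} = F_{n+1} + F_n$. Dividing by $\ncol(\PathBar{n}) = F_{n+1}$ gives the stated formula.

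The only real work lies in guessing and verifying the closed form for $T_n$, and this is a routine Fibonacci identity; the conceptual heart of the argument is the matching--coloring bijection, which reduces the coloring count for $\PathBar{n}$ to a classical counting problem on paths.
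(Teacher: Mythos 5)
Your proof is correct, and it takes a genuinely different route from the paper. The paper proceeds by induction on $n$: it cites the known fact $\ncol(\PathBar{n})=F_{n+1}$, applies the addition--identification recurrence \eqref{rec_plus} to get $\totcol(\PathBar{n}) = \totcol(\PathBar{n-1} + \K{1}) + \totcol(\PathBar{n-2} + \K{1})$, and then uses the join identity $\totcol(G+\K{1})=\totcol(G)+\ncol(G)$ from the proof of Theorem~\ref{thm:avColJoin} before substituting the inductive hypothesis. You instead observe that the color classes of $\PathBar{n}$ are cliques of $\Path{n}$, hence singletons and edges, so the non-equivalent colorings of $\PathBar{n}$ biject with matchings of $\Path{n}$, a matching of size $k$ giving $n-k$ colors; this reduces everything to the classical recurrences $M_n = M_{n-1}+M_{n-2}$ and $T_n = T_{n-1}+T_{n-2}+F_{n-1}$ obtained by conditioning on the last edge, and your closed form $T_n = \frac{2nF_{n+1}-(n+1)F_n}{5}$ checks out against both the base cases and the recurrence. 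Your argument is more self-contained (it rederives $\ncol(\PathBar{n})=F_{n+1}$ rather than citing \cite{DP09}) and yields strictly more information, namely the full distribution $\ncolk(\PathBar{n},k)=m_{n-k}$; the paper's version, by contrast, stays inside the deletion--contraction and join machinery it has already set up and reuses throughout, which keeps the paper's toolkit uniform. Both proofs ultimately rest on a guess-and-verify Fibonacci computation of comparable length.
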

\begin{proof}
	The result is true for $n\leq 2$. Indeed, 
	\begin{itemize}
		\item For $n=1$, we have $\PathBar{1}=K_1$ which implies $\avcol(\PathBar{1})=1=\frac{2F_3+F_2}{5F_2}$;
		\item For $n=2$, we have $\PathBar{2}=\overline{K}_{2}$ which implies $\avcol(\PathBar{2})=\frac{\B{3} - \B{2}}{\B{2}}=\frac{3}{2}=\frac{3F_4+3F_3}{5F_3}$.
	\end{itemize}
	For larger values of $n$, we proceed by induction.
It is shown in \cite{DP09} that $\ncol(\PathBar{n}){=}F_{n+1}$.
Also, it follows from Equations \eqref{rec_plus} that 
$\totcol(\PathBar{n}) = \totcol(\PathBar{n-1} + \K{1}) + \totcol(\PathBar{n-2} + \K{1})$. Moreover, as shown in the proof of Theorem~\ref{thm:avColJoin}, we have
$$\totcol(G+\K{1}) = \totcol(G)\ncol(\K{1})+\ncol(G)\totcol(\K{1}) = \totcol(G) + \ncol(G).$$
Hence,
\begin{align*}
\avcol(\PathBar{n}) &= \frac{\totcol(\PathBar{n-1}) + \ncol(\PathBar{n-1}) + \totcol(\PathBar{n-2}) + \ncol(\PathBar{n-2})}{F_{n+1}} \\
&= \frac{nF_{n+1}+(2n-3)F_n}{5F_{n+1}} + \frac{F_n}{F_{n+1}} + \frac{(n-1)F_n+(2n-5)F_{n-1}}{5F_{n+1}} + \frac{F_{n-1}}{F_{n+1}} \\
&= \frac{nF_{n+1}+(3n+1)F_n+2nF_{n-1}}{5F_{n+1}} = \frac{3nF_{n+1}+(n+1)F_n}{5F_{n+1}} \\&= \frac{(n+1)F_{n+2} + (2n-1)F_{n+1}}{5F_{n+1}}.
\end{align*}
\end{proof}

\begin{prop} 
	$\displaystyle\avcol(\CBar{n}) =   \frac{nF_{n+1}}{L_n}$ for all  $n\geq 4$.
\end{prop}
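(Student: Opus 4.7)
The plan is to express $\CBar{n}$ in terms of $\PathBar{n}$ via edge deletion, then apply the deletion-contraction recurrences (\ref{rec_minus}) in reverse together with the values already computed in Proposition~\ref{prop:totColFibo}. Since $\PathBar{n}$ is obtained from $\CBar{n}$ by adding the single missing edge $v_1 v_n$ (where $v_1,\ldots,v_n$ are the cycle vertices), we have $\PathBar{n} = \CBar{n} + v_1 v_n$, and Equations \eqref{rec_minus} applied to $\PathBar{n}$ and the edge $v_1 v_n$ give
\[
\ncol(\CBar{n}) = \ncol(\PathBar{n}) + \ncol((\PathBar{n})_{\mid v_1v_n}), \qquad \totcol(\CBar{n}) = \totcol(\PathBar{n}) + \totcol((\PathBar{n})_{\mid v_1v_n}).
\]

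The key structural observation is that merging $v_1$ and $v_n$ in $\PathBar{n}$ produces a vertex whose non-neighbors in the old graph were $v_2$ (non-neighbor of $v_1$) and $v_{n-1}$ (non-neighbor of $v_n$); since both $v_2$ and $v_{n-1}$ are adjacent to the merged vertex (via $v_n$ and $v_1$ respectively), the merged vertex becomes dominant. The remaining vertices $v_2,\ldots,v_{n-1}$ induce $\PathBar{n-2}$ in $\PathBar{n}$. Hence $(\PathBar{n})_{\mid v_1 v_n} \simeq \PathBar{n-2} + \K{1}$, and using $\totcol(G+\K{1})=\totcol(G)+\ncol(G)$ (as established inside the proof of Theorem~\ref{thm:avColJoin}), I obtain
\[
\ncol(\CBar{n}) = \ncol(\PathBar{n}) + \ncol(\PathBar{n-2}), \qquad \totcol(\CBar{n}) = \totcol(\PathBar{n}) + \totcol(\PathBar{n-2}) + \ncol(\PathBar{n-2}).
\]

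Substituting $\ncol(\PathBar{k}) = F_{k+1}$ (from \cite{DP09}) immediately gives $\ncol(\CBar{n}) = F_{n+1} + F_{n-1} = L_n$, which matches the target denominator. For the numerator, I use Proposition~\ref{prop:totColFibo} to get $\totcol(\PathBar{k}) = \frac{(k+1)F_{k+2} + (2k-1)F_{k+1}}{5}$, so
\[
5\,\totcol(\CBar{n}) = (n+1)F_{n+2}+(2n-1)F_{n+1} + (n-1)F_{n}+(2n-5)F_{n-1} + 5F_{n-1}.
\]
The main (and only) routine obstacle is simplifying this sum; expanding $F_{n+2}=F_{n+1}+F_n$ and collecting coefficients yields $3nF_{n+1} + 2n(F_n + F_{n-1}) = 5nF_{n+1}$, hence $\totcol(\CBar{n}) = nF_{n+1}$.

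Dividing, $\avcol(\CBar{n}) = \totcol(\CBar{n})/\ncol(\CBar{n}) = nF_{n+1}/L_n$, as desired. No induction is needed, since everything reduces to the closed forms already established for the path complement. The only delicate point is correctly identifying the structure of $(\PathBar{n})_{\mid v_1 v_n}$; requiring $n\geq 4$ ensures that $v_2 \neq v_{n-1}$ so the $\PathBar{n-2}$ subgraph is well defined and the merged vertex is genuinely dominant.
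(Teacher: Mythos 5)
Your proof is correct and follows essentially the same route as the paper: both decompose $\CBar{n}$ via deletion--contraction on the edge $v_1v_n$ into $\ncol(\CBar{n})=\ncol(\PathBar{n})+\ncol(\PathBar{n-2}+\K{1})$ and likewise for $\totcol$, then plug in Proposition~\ref{prop:totColFibo} and simplify to $nF_{n+1}$. The only differences are cosmetic: you justify the isomorphism $(\PathBar{n})_{\mid v_1v_n}\simeq\PathBar{n-2}+\K{1}$ explicitly and rederive $\ncol(\CBar{n})=F_{n+1}+F_{n-1}=L_n$ rather than citing \cite{DP09}, both of which are fine.
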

\begin{proof}
	It follows from Equations \eqref{rec_plus} that 
	$$\totcol(\CBar{n}) = \totcol(\PathBar{n})+\totcol(\PathBar{n-2} + \K{1}).$$
	Moreover, it is shown in \cite{DP09} that $\ncol(\CBar{n})=L_{n}$. Since $\totcol(\PathBar{n-2} + \K{1})=\totcol(\PathBar{n-2})+\ncol(\PathBar{n-2})
	$, Proposition \ref{prop:totColFibo} implies
\begin{align*}
\avcol(\CBar{n}) &=  \frac{\totcol(\PathBar{n}) + \totcol(\PathBar{n-2}) + \ncol(\PathBar{n-2})}{L_n} \\
				  &= \frac{(n+1)F_{n+2} + (2n-1)F_{n+1}}{5L_n} + \frac{(n-1)F_{n} + (2n-5)F_{n-1}}{5L_n} + \frac{F_{n-1}}{L_n} \\
				  &= \frac{(n+1)F_{n+2}+(2n-1)F_{n+1}+(n-1)F_n+2nF_{n-1}}{5L_n} \\
				  &= \frac{3nF_{n+1}+2nF_{n}+2nF_{n-1}}{5L_n} = \frac{5nF_{n+1}}{5L_n} = \frac{nF_{n+1}}{L_n}.
\end{align*}

\end{proof}

    \section{Lower bounds on $\A(G)$} \label{sec_lb}
In this section, we give three conjectures for potential lower bounds on $\avcol(G)$. We then establish their validity for triangulated graphs and for graphs $G$ with maximum degree $\Delta(G)\le 2$.

\subsection{Conjectures}
 The lower bounds we are interested in depend on two parameters $n$ and $r$ with  $1{\le} r{\le} n$. They are equal to $\avcol(G)$ for some specific graphs $G$. More precisely, with the help of Propositions \ref{avcol-kn} and \ref{prop:KnpK1}, we define
\begin{itemize}
	\item $L_1(n)=\avcol(\E{n})=\frac{\displaystyle\B{n+1} - \B{n}}{\displaystyle\B{n}}$,
	\item $L_2(n,r)=\avcol(\K{r} \cup (n{-}r) \K{1})=\frac
	{\displaystyle\sum_{k=r}^nk\sum_{i=0}^r{k-i \choose r-i}{r \choose i} (r-i)!\stirlings{n-r}{k-i}}
	{\displaystyle\sum_{k=r}^n\sum_{i=0}^r{k-i \choose r-i}{r \choose i} (r-i)!\stirlings{n-r}{k-i}}$,
	\item $L_3(n,r)=\avcol(\K{1,r{-}1} \cup (n{-}r) \K{1})=\frac{\displaystyle\sum_{i = 0}^{n{-}r} {n{-}r \choose i} \B{r+i}}
	{\displaystyle\sum_{i = 0}^{n{-}r} {n{-}r \choose i} \B{r+i{-}1}}$.
\end{itemize} 

\captionsetup[subfigure]{labelformat=empty}\begin{figure}
	\centering
	\subfloat[][\E{7}]{
    	\includegraphics[width=.15\textwidth]{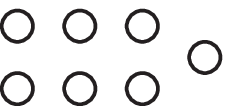}
	}
	\hspace{1cm}\subfloat[][$\K{7}\cup3\K{1}$]{
    	\includegraphics[width=.15\textwidth]{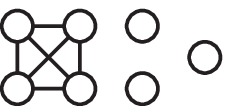}
	}
	\hspace{1cm}\subfloat[][$\K{1,3}\cup3\K{1}$]{
    	\includegraphics[width=.15\textwidth]{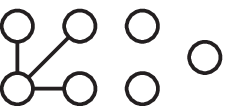}
	}
	\caption{The three graphs that define the lower bounds $L_1(7)$, $L_2(7,4)$ and $L_3(7,4)$.}\label{fig:3graphs}
\end{figure}


For illustration, we show in Figure \ref{fig:3graphs} the three graphs that give the bounds
for $n=7$ and $r=4$.

Given a graph $G$ of order $n$, we are interested in the following inequalities, one of them being a conjecture, the other ones being proved here below:
$$
L_1(n) {\le} {\min}\Big\{L_2(n,\chi(G)), L_3(n,\Delta(G){+}1)\Big\} {\le} {\max}\Big \{L_2(n,\chi(G)), L_3(n,\Delta(G){+}1)\Big \} {\le} \A(G).
$$

The first inequality follows from Theorem~\ref{thm:removeSimplicialEdge} since $\E{n}$ is obtained from $\K{r} {\cup} (n{-}r) \K{1}$ and from $\K{1,r{-}1} {\cup} (n{-}r) \K{1}$ by repeatedly removing edges incident to simplicial vertices. The second inequality is trivial.
The last inequality is an open problem stated in the two following conjectures.

\begin{conj} \label{conj_chi}
Let  $G$ be a graph of order $n$. Then,
$$
\avcol(G)\geq L_2(n,\chi(G))
$$
with equality if and only if $G \simeq \K{\chi(G)} \cup (n{-}\chi(G)) \K{1}$.
\end{conj}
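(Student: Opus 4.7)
The plan is to reformulate Conjecture~\ref{conj_chi} as a cross-product (likelihood-ratio) inequality and attack it by strong induction on $n$. Setting $\chi:=\chi(G)$ and $H:=\K{\chi}\cup(n-\chi)\K{1}$, unpacking the definitions of $\avcol$, $\totcol$ and $\ncol$ and symmetrizing the resulting double sum yields
\[
\avcol(G)-\avcol(H)=\frac{1}{\ncol(G)\ncol(H)}\sum_{k>k'}(k-k')\bigl(\ncolk(G,k)\ncolk(H,k')-\ncolk(G,k')\ncolk(H,k)\bigr),
\]
so the conjecture follows from
\begin{equation}\tag{$\star$}\label{eq:star}
\ncolk(G,k)\,\ncolk(H,k')\ \geq\ \ncolk(G,k')\,\ncolk(H,k)\qquad\text{for all }k>k',
\end{equation}
with strict inequality for at least one $(k,k')$ when $G\not\simeq H$. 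This is exactly the hypothesis of Theorem~\ref{thm:crossProductAvCol} applied with the empty graph as common part, and \eqref{eq:star} is trivial whenever $k'<\chi$ since both sides vanish.

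I would prove \eqref{eq:star} by strong induction on $n$, with base cases $n\in\{\chi,\chi+1\}$ dispatched by direct enumeration using Proposition~\ref{prop:KnpK1}. The clean inductive step occurs when $G$ has an isolated vertex $v$: writing $G=G'\cup\K{1}$ and $H=H'\cup\K{1}$ with $H'=\K{\chi}\cup(n-1-\chi)\K{1}$, the identity $\ncolk(F\cup\K{1},k)=k\,\ncolk(F,k)+\ncolk(F,k-1)$ applied to both $G$ and $H$ expands
\begin{align*}
&\ncolk(G,k)\ncolk(H,k')-\ncolk(G,k')\ncolk(H,k)\\
&\quad=\ kk'\bigl[\ncolk(G',k)\ncolk(H',k')-\ncolk(G',k')\ncolk(H',k)\bigr]\\
&\quad\phantom{=}\ +k\bigl[\ncolk(G',k)\ncolk(H',k'{-}1)-\ncolk(G',k'{-}1)\ncolk(H',k)\bigr]\\
&\quad\phantom{=}\ +k'\bigl[\ncolk(G',k{-}1)\ncolk(H',k')-\ncolk(G',k')\ncolk(H',k{-}1)\bigr]\\
&\quad\phantom{=}\ +\bigl[\ncolk(G',k{-}1)\ncolk(H',k'{-}1)-\ncolk(G',k'{-}1)\ncolk(H',k{-}1)\bigr].
\end{align*}
Each bracket has the form $\ncolk(G',a)\ncolk(H',b)-\ncolk(G',b)\ncolk(H',a)$ with $a\geq b$, hence is nonnegative by the inductive hypothesis at order $n-1$; moreover, since $G\not\simeq H$ implies $G'\not\simeq H'$, the first bracket is strictly positive for some $(k,k')$.

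The main obstacle is the case where $G$ has no isolated vertex. If $G$ admits a simplicial vertex $v$---automatic for triangulated graphs, where $\omega(G)=\chi$ and a simplicial elimination ordering exists---the analogous identity $\ncolk(G,k)=\ncolk(G-v,k-1)+(k-|N(v)|)\ncolk(G-v,k)$, which specialises Lemma~\ref{lem:AddNode}, holds, but the factor $(k-|N(v)|)$, compared with $k$ for $H$, injects negative cross-terms that spoil the bracketing above. For triangulated $G$ I would try to push the induction through by exploiting the simplicial elimination ordering, using Theorems~\ref{thm:removeSimplicialEdge} and~\ref{thm:pro_avcolInfSum} together with Corollary~\ref{cor:addLinkedToClique} to reduce $G$ to smaller graphs of chromatic number $\chi$ covered by the inductive hypothesis. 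For arbitrary $G$ with no simplicial vertex---odd cycles $\C{2\ell+1}$ already display the difficulty, since removing any edge drops the chromatic number and Corollary~\ref{cor:5} only yields the weaker bound $\avcol(\C{2\ell+1})>\avcol(\Path{2\ell})$---the only general tool left is the deletion-contraction recursion~\eqref{recs_minus}--\eqref{recs_plus}, which would need to be iterated carefully to express $\ncolk(G,k)$ as a controlled combination of coloring counts of smaller graphs and then unwind \eqref{eq:star} inductively. Handling this edge-critical, non-triangulated regime is, in my estimate, the genuine crux of the conjecture.
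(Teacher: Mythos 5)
First, a point of orientation: the statement you are proving is stated in the paper as an \emph{open conjecture}. The authors establish it only for triangulated graphs (by repeatedly deleting edges incident to simplicial vertices, via Theorem~\ref{thm:removeSimplicialEdge}) and for graphs with $\Delta(G)\le 2$ (by decomposing $\ncolk(\C{x}\cup p\K{1},\cdot)$ over the graphs $\Q{i}$ and invoking Theorem~\ref{thm:pro_avcolInfSum}); there is no proof of the general statement to compare against. Your proposal does not close the gap either, and you say so yourself. Your reduction of the conjecture to the cross-product inequality $(\star)$ is algebraically correct, the identity $\ncolk(F\cup\K{1},k)=k\,\ncolk(F,k)+\ncolk(F,k-1)$ is right, and the four-bracket expansion does propagate both nonnegativity and strictness from order $n-1$ to order $n$. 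But this inductive step applies \emph{only} when $G$ has an isolated vertex, so the induction terminates not at your base cases $n\in\{\chi,\chi+1\}$ but at the isolated-vertex-free core of $G$, which can be an arbitrary graph with minimum degree at least one. For that core --- already for a single odd cycle --- you have no argument, and that is exactly where the difficulty of the conjecture lives. What you have is a correct reformulation plus one case of an induction that does not reach its base.

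Two further cautions. First, $(\star)$ is strictly stronger than the conjecture: it asserts a likelihood-ratio dominance of $\ncolk(G,\cdot)$ over $\ncolk(H,\cdot)$, not merely a comparison of means. Strengthening a statement so that an induction closes is a legitimate strategy, but you offer no evidence that $(\star)$ holds beyond the trivial vanishing for $k'<\chi(G)$; before investing further you should test it on the cases where the conjecture is known, e.g.\ $\C{x}\cup(n-x)\K{1}$ against $\K{3}\cup(n-3)\K{1}$, for which Lemmas~\ref{prop:ncolkCn3} and~\ref{cor:ncolkC3K1} give explicit coefficients. Second, your fallback for triangulated graphs (``push the induction through using the simplicial elimination ordering'') conflates two different statements: Theorem~\ref{thm:removeSimplicialEdge} and Corollary~\ref{cor:addLinkedToClique} compare \emph{averages}, not the individual counts $\ncolk(\cdot,k)$, so they cannot be fed back into $(\star)$; as you yourself note, the factor $k-|N(v)|$ in Lemma~\ref{lem:AddNode} breaks the sign pattern of your brackets. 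As it stands the proposal does not recover even the paper's partial results, let alone the full conjecture; its genuine contribution is the observation that the isolated-vertex recursion preserves the cross-product dominance, which could be a useful reduction (from general $G$ to its core) if the core case were ever settled.
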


\begin{conj} \label{conj_delta}
Let  $G$ be a graph of order $n$. Then
$$
\avcol(G) \ge L_3(n,\Delta(G){+}1)
$$
with equality if and only if $G \simeq \K{1,\Delta(G)} \cup (n{-}\Delta(G){-}1)\K{1}$.
\end{conj}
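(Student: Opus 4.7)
The plan is to prove Conjecture \ref{conj_delta} by induction on the number of edges $m$ of $G$, with the inductive hypothesis covering all graphs on at most $n$ vertices and strictly fewer edges. The key observation driving the induction is that for $uv\in E(G)$, the deletion-contraction recurrences \eqref{rec_minus} can be rewritten as
$$\avcol(G-uv)=\frac{\ncol(G)\,\avcol(G)+\ncol(\merge{G}{uv})\,\avcol(\merge{G}{uv})}{\ncol(G)+\ncol(\merge{G}{uv})},$$
so $\avcol(G-uv)$ is always sandwiched between $\avcol(G)$ and $\avcol(\merge{G}{uv})$. In particular, $\avcol(G-uv)\le \avcol(G)$ holds if and only if $\avcol(\merge{G}{uv})\le \avcol(G)$, a criterion I would exploit repeatedly.

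For the base case $m=\Delta$, a vertex of degree $\Delta$ must absorb every edge, forcing $G\simeq \K{1,\Delta}\cup(n-\Delta-1)\K{1}$ and giving equality.

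For the inductive step with $m>\Delta$, I would seek an edge $uv\in E(G)$ satisfying (i) $\Delta(G-uv)=\Delta$ and (ii) $\avcol(G-uv)\le\avcol(G)$. Given such an edge, the inductive hypothesis applied to $G-uv$ yields $\avcol(G)\ge\avcol(G-uv)\ge L_3(n,\Delta+1)$. A clean sufficient condition for (ii) is Theorem \ref{thm:removeSimplicialEdge}: if $u$ is a simplicial vertex of $G$ and $v$ is any neighbor of $u$, then $\avcol(G-uv)<\avcol(G)$. This route handles the triangulated case: one combines Corollary \ref{cor:addLinkedToClique} to absorb low-degree simplicial vertices, and then peels off edges incident to a simplicial vertex while maintaining the max-degree constraint, until the extremal graph is reached.

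The main obstacle is the non-triangulated case, where simplicial vertices need not exist and condition (ii) becomes delicate. My fallback would be to apply Lemma \ref{lem:AddNode} at a vertex $w$ of maximum degree, expressing $\ncol(G)$ and $\T(G)$ in terms of the statistics $\ncolk_{N(w),i}(G-w,k)$, and then comparing these with the corresponding quantities for the extremal graph (where $\K{1,\Delta}\cup(n-\Delta-1)\K{1}$ minus its star center is $\E{n-1}$ with $\Delta$ distinguished vertices, giving explicit Stirling-number expressions). The conjecture would then reduce to a combinatorial inequality on weighted sums of Stirling and Bell numbers, motivated by log-convexity of the Bell sequence. I expect the hardest step to be proving this inequality, likely via an explicit injection between colorings or an induction on a well-chosen refinement statistic. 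The partial results for triangulated graphs and for $\Delta(G)\le 2$ follow from direct structural arguments, but extending them to arbitrary graphs appears to require a genuinely new combinatorial idea.
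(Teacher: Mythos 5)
You are attempting to prove a statement that the paper itself leaves open: Conjecture \ref{conj_delta} is only established there for triangulated graphs and for graphs with $\Delta(G)\le 2$, and your proposal does not close the general case either. The concrete gap is in your inductive step. Your identity $\avcol(G-uv)=\bigl(\ncol(G)\avcol(G)+\ncol(\merge{G}{uv})\avcol(\merge{G}{uv})\bigr)\big/\bigl(\ncol(G)+\ncol(\merge{G}{uv})\bigr)$ is correct, and condition (i) ($\Delta(G-uv)=\Delta$) can always be met when $m>\Delta$ by picking an edge not incident to a fixed vertex of maximum degree; but condition (ii) cannot. The paper explicitly notes that for $\K{2,4}$ the removal of \emph{any} edge strictly increases $\avcol$, so no edge satisfies $\avcol(G-uv)\le\avcol(G)$ and the induction has nowhere to go (equivalently, by your own sandwich criterion, $\avcol(\merge{G}{uv})>\avcol(G)$ for every edge of $\K{2,4}$). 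Since the conjecture presumably still holds for $\K{2,4}$, it is the method that fails, not the statement. Your fallback via Lemma \ref{lem:AddNode} at a maximum-degree vertex is only a reduction to an unproven inequality on weighted sums of Stirling and Bell numbers; no injection or induction is actually supplied, so it does not constitute a proof.

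For the two cases the paper does settle, your outline is close in spirit but coarser than the actual arguments. For triangulated graphs the paper indeed relies on Theorem \ref{thm:removeSimplicialEdge}, but the delicate point for Conjecture \ref{conj_delta} is preserving the degree of the chosen maximum-degree vertex $v$ throughout the reduction: the paper partitions $N(v)$ and $\overline{N}(v)$, finds a simplicial vertex $w\neq v$ in $G[N_2(v)\cup\overline{N}_2(v)\cup\{v\}]$, and removes all edges at $w$ \emph{except} $wv$ when $w\in N_2(v)$, so that the process terminates exactly at $\K{1,\Delta(G)}\cup(n-\Delta(G)-1)\K{1}$; your phrase about ``maintaining the max-degree constraint'' does not explain how this is guaranteed. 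For $\Delta(G)\le 2$ the paper additionally needs Lemma \ref{lem:12} (comparing $\C{n}$ with $\Path{n}$) because cycles of length at least $4$ contain no simplicial vertices, so Theorem \ref{thm:removeSimplicialEdge} alone cannot open them; your sketch omits this step. If you want to contribute here, the honest framing is that you have a plausible programme whose central lemma (an edge or vertex whose removal does not increase $\avcol$ while preserving $\Delta$, or the Stirling--Bell inequality from Lemma \ref{lem:AddNode}) remains exactly as open as the conjecture itself.
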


Since $L_1(n) \le \min\{L_2(n,\chi(G)), L_3(n,\Delta(G){+}1)\}$, it suffices to show that one of these conjectures is true to prove that the empty graph $\E{n}$ has the minimum value for $\A(G)$ among all graphs $G$ of order $n$. This leads to the following weaker conjecture.

\begin{conj} \label{conj_empty}
Let  $G$ be a graph of order $n$, then,
$$
\avcol(G)\ge L_1(n)
$$
with equality if and only if $G \simeq \E{n}$.
\end{conj}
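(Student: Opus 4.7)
The plan is to prove Conjecture~\ref{conj_empty} by induction on the number of edges of $G$, keeping the order $n$ fixed. The base case $G \simeq \E{n}$ gives equality by the definition of $L_1(n)$. For the inductive step, given a graph $G$ of order $n$ with at least one edge, it suffices to exhibit some edge $e \in E(G)$ such that $\avcol(G) > \avcol(G-e)$; the inductive hypothesis applied to $G-e$ then yields $\avcol(G) > L_1(n)$, and the strict inequality also pins down the uniqueness of the extremal graph $\E{n}$.

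When $G$ contains a simplicial vertex of positive degree, Theorem~\ref{thm:removeSimplicialEdge} provides precisely such an edge, and we are done. The remaining situation is when $G$ has no simplicial vertex of positive degree---for instance, when $G$ contains a large induced chordless cycle, or more generally every non-isolated vertex of $G$ has two mutually non-adjacent neighbors. For such $G$, the edge producing the required decrease must be found by a different route.

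A natural reduction exploits the deletion-contraction identities~\eqref{rec_minus}: a direct computation shows that for any edge $uv \in E(G)$, the inequality $\avcol(G) > \avcol(G-uv)$ is equivalent to $\avcol(G-uv) > \avcol(\merge{G}{uv})$. Setting $H = G-uv$, so that $uv$ is a non-edge of $H$ and $\merge{G}{uv} = \merge{H}{uv}$, this becomes the statement $\avcol(H) > \avcol(\merge{H}{uv})$. Intuitively, colorings of $H$ in which $u$ and $v$ lie in distinct blocks use more colors on average than those in which they share a block (the latter being in bijection, preserving the block count, with colorings of $\merge{H}{uv}$). A natural tool is the injection from colorings of $\merge{H}{uv}$ into colorings of $H+uv$ that splits off $v$ as a fresh singleton block from the block containing the merged vertex; this is well-defined because $uv \notin E(H)$, and it raises the block count by one, yielding $\ncol(H+uv) \geq \ncol(\merge{H}{uv})$ and $\totcol(H+uv) \geq \totcol(\merge{H}{uv}) + \ncol(\merge{H}{uv})$.

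The main obstacle is that the injection-based inequalities above are not sharp enough to force $\avcol(H+uv) > \avcol(\merge{H}{uv})$ in general: one must also control the contribution of the residual colorings of $H+uv$ in which $v$ is not a singleton block, and no naive bound seems to work uniformly. In the special case $H = \E{n}$, the claim $\avcol(\E{n}) > \avcol(\E{n-1})$ reduces precisely to the log-convexity $\B{n-1}\B{n+1} > \B{n}^2$ of the Bell numbers recalled in the introduction; in this sense Conjecture~\ref{conj_empty} may be viewed as a graph-theoretic generalization of this log-convexity to the ``graphical Bell numbers'' $\ncol(H)$. Carrying out this generalization---perhaps via a sharpened pairing of colorings of $\merge{H}{uv}$ and $H+uv$ of suitably distinct sizes, or a correlation inequality on the partition lattice of $V(H)$ showing that the block-count function and the indicator ``$u,v$ lie in different blocks'' are positively correlated under the uniform distribution on colorings of $H$---is the step I expect to be the conceptual heart of the difficulty, consistent with the statement being presented as a conjecture.
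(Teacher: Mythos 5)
You are transparent that your argument is incomplete, and indeed the statement you are addressing is presented in the paper as an open conjecture: the authors prove it only for triangulated graphs and for graphs with $\Delta(G)\le 2$, so there is no full proof to match yours against. The real problem is that your proposed induction is not merely unfinished but provably unworkable as formulated. Your inductive step requires, for every graph $G$ with at least one edge, \emph{some} edge $e$ with $\avcol(G)>\avcol(G-e)$. The paper explicitly exhibits a counterexample to this: for $G\simeq\K{2,4}$, the removal of \emph{any} edge strictly increases $\avcol(G)$. Hence for $\K{2,4}$ no edge at all produces the required decrease, the inductive step cannot be carried out, and no ``different route'' for locating such an edge can exist. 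By your own (correct) equivalence $\avcol(G)>\avcol(G-uv)\iff\avcol(G-uv)>\avcol(\merge{G}{uv})$, this also means that for every edge $uv$ of $\K{2,4}$ one has $\avcol(\K{2,4}-uv)<\avcol(\merge{(\K{2,4})}{uv})$, so the positive-correlation heuristic you hope to establish between block count and the event ``$u,v$ in different blocks'' genuinely fails for some graphs $H$ and non-adjacent pairs $u,v$; any correct proof would have to be non-monotone in the edge set. The paper further notes that induction on connected components also fails ($\avcol(\K{2,3})<\avcol(\K{3}\cup 2\K{1})$ yet $\avcol(\K{2,3}\cup\K{1})>\avcol(\K{3}\cup 3\K{1})$).

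What the paper does instead, for the classes it can handle, is closer in spirit to your first case only: Theorem~\ref{thm:removeSimplicialEdge} (removing an edge incident to a simplicial vertex strictly decreases $\avcol$) drives the reduction for triangulated graphs, and for $\Delta(G)\le 2$ the authors combine it with ad hoc lemmas comparing $\C{n}$, $\Path{n}$ and the graphs $\Q{n}$, together with the averaging principle of Theorem~\ref{thm:pro_avcolInfSum}, rather than a single edge-deletion induction. Your observations about the deletion--contraction reformulation and the log-convexity of the Bell numbers as the special case $H=\E{n}$ are accurate and are consistent with the paper's framing, but they do not constitute progress toward closing the gap, and the central reduction you propose is ruled out by $\K{2,4}$.
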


\begin{thm}
Conjectures \ref{conj_chi} and \ref{conj_delta} (and therefore \ref{conj_empty}) are true for triangulated graphs.
\end{thm}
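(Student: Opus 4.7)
The plan is to derive both conjectures from Theorem~\ref{thm:removeSimplicialEdge} by exhibiting, for every triangulated $G$, a finite sequence of chordal graphs $G = G_0 \to G_1 \to \cdots \to G_t$ ending at the conjectured extremal graph, where each step removes a single edge incident to a simplicial vertex of $G_i$. Every step strictly decreases $\A$, so the chain yields $\A(G) \geq \A(G_t)$, with equality exactly when $t = 0$ (that is, when $G$ is already the extremal graph). Chordality is preserved at each step because an induced cycle of length $\geq 4$ in $G_i - vw$ would have to pass through the simplicial vertex $v$, yet $v$'s shrunken neighborhood remains a clique and precludes such a cycle.

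For Conjecture~\ref{conj_chi} I would fix a maximum clique $K$ of size $\chi(G)$, which exists because chordal graphs satisfy $\chi = \omega$, and aim at $K \cup (n-\chi(G))\K{1}$. At each non-terminal step, $G_i$ properly contains $K$ together with some non-isolated vertex outside $V(K)$, so the induced subgraph $G'$ of $G_i$ on non-isolated vertices is chordal, strictly contains $K$, and hence is not complete. By Dirac's theorem, $G'$ has two non-adjacent simplicial vertices, at most one of which lies in the clique $K$; this furnishes a simplicial vertex $v$ of $G'$ outside $V(K)$. The neighborhood of $v$ in $G_i$ coincides with its neighborhood in $G'$ (its neighbors are automatically non-isolated), so $v$ is also simplicial in $G_i$, and removing any incident edge leaves $K$ intact. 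Iterating until every non-$K$ vertex is isolated gives $G_t = \K{\chi(G)} \cup (n-\chi(G))\K{1}$ and therefore $\A(G) \geq L_2(n, \chi(G))$, with equality iff $G \simeq \K{\chi(G)} \cup (n-\chi(G))\K{1}$.

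For Conjecture~\ref{conj_delta} I would fix a vertex $v^*$ of maximum degree $\Delta(G)$, call its $\Delta$ incident edges \emph{protected}, and aim at $\K{1, \Delta(G)} \cup (n-\Delta(G)-1)\K{1}$. At each non-terminal step, let $U$ be the set of vertices currently incident to an unprotected edge and let $G^\dagger = G_i[U]$, which is chordal. The easy situations are (a) when some simplicial vertex of $G^\dagger$ lies outside $N(v^*)$, in which case it is automatically simplicial in $G_i$ and has an unprotected edge to delete, and (b) when $U \subseteq N(v^*)$, where simpliciality inside $G^\dagger$ combined with $v^* \sim U$ promotes a simplicial vertex of $G^\dagger$ to a simplicial vertex of $G_i$.

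The hard part will be the residual case where $U \not\subseteq N(v^*)$ yet every simplicial vertex of $G^\dagger$ lies in $N(v^*)$. The plan to close this is to exploit chordality of $G_i$: pick a simplicial $z$ of the chordal induced subgraph $G^\dagger[U \setminus N(v^*)]$. For any two $a_1, a_2 \in N_{G^\dagger}(z) \cap N(v^*)$ the configuration $z, a_1, v^*, a_2$ would be an induced $4$-cycle of $G_i$ if $a_1 \not\sim a_2$, so $N_{G^\dagger}(z) \cap N(v^*)$ is forced to be a clique; a further case analysis on cross-adjacencies between $N_{G^\dagger}(z) \cap N(v^*)$ and $N_{G^\dagger}(z) \setminus N(v^*)$ either promotes $z$ (or a vertex found recursively) to a simplicial vertex of $G_i$ with an unprotected edge, or exhibits an induced $4$- or $5$-cycle through $v^*$ contradicting chordality. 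Once the existence of a simplicial vertex carrying an unprotected edge is pinned down at every step, iterating Theorem~\ref{thm:removeSimplicialEdge} yields $\A(G) \geq L_3(n, \Delta(G)+1)$, with equality iff $G \simeq \K{1, \Delta(G)} \cup (n-\Delta(G)-1)\K{1}$.
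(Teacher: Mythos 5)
Your reduction for Conjecture~\ref{conj_chi} is correct and is essentially the paper's argument: fix a maximum clique $K$ of size $\chi(G)=\omega(G)$, use Dirac's theorem (two non-adjacent simplicial vertices in a non-complete chordal graph, hence at least one outside $K$) to peel off edges incident to simplicial vertices outside $K$ while preserving chordality, and invoke Theorem~\ref{thm:removeSimplicialEdge} at each step. That half needs no changes.

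The argument for Conjecture~\ref{conj_delta}, however, has a genuine gap: the entire proof hinges on the claim that at every non-terminal step some simplicial vertex of $G_i$ is incident to an unprotected edge, and in your ``residual case'' ($U\not\subseteq N(v^*)$ but every simplicial vertex of $G^\dagger=G_i[U]$ lies in $N(v^*)$) you only outline ``a further case analysis'' that either promotes a vertex or finds an induced $4$- or $5$-cycle; this analysis is not carried out, and it is exactly the crux (a vertex simplicial in $G_i[U]$ with $U\not\ni v^*$ can fail to be simplicial in $G_i$ precisely because $v^*$ sits in its neighborhood without being adjacent to all of it). The difficulty is self-inflicted by excluding $v^*$ from the auxiliary subgraph. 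The paper instead partitions $N(v^*)$ into $N_1$ (degree-one neighbors) and $N_2$, partitions the non-neighbors into $\overline{N}_1$ (isolated) and $\overline{N}_2$, and works with the induced subgraph $H=G_i[\{v^*\}\cup N_2\cup\overline{N}_2]$, which is chordal and contains a simplicial vertex $w\neq v^*$ (by Dirac if $H$ is not complete, trivially otherwise). Because the discarded vertices of $N_1\cup\overline{N}_1$ have no neighbors other than possibly $v^*$, the neighborhood of $w$ in $G_i$ equals its neighborhood in $H$, so $w$ is simplicial in $G_i$; and by definition of $N_2$ and $\overline{N}_2$ it carries at least one edge other than $v^*w$, i.e., an unprotected edge. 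Removing all such edges at $w$ keeps the graph chordal, never moves a vertex between $N(v^*)$ and $\overline{N}(v^*)$, and terminates at $\K{1,\Delta(G)}\cup(n{-}\Delta(G){-}1)\K{1}$. Replacing your $G^\dagger$ by this $H$ makes your residual case disappear and completes the proof along the lines you intended.
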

\begin{proof} Let us first observe that removing an edge incident to a simplicial vertex in a triangulated graph gives another triangulated graph.
So let $G$ be a triangulated graph. Since $G$ is perfect, it contains a clique $K$ of order $|K|=\chi(G)$. It is well known that  triangulated graphs that are not a clique contain at least two non-adjacent simplicial vertices \cite{Dirac}. Hence, $G$ can be reduced to $\K{\chi(G)} \cup (n{-}\chi(G)) \K{1}$ by repeatedly removing  edges incident to simplicial vertices. We know from Theorem  \ref{thm:removeSimplicialEdge} that each of these edge removals strictly decreases $\avcol(G)$. We thus have  $\avcol(G)\geq \avcol(\K{\chi(G)} \cup (n{-}\chi(G)) \K{1})$, with equality if and only if $G\simeq \K{\chi(G)} \cup (n{-}\chi(G)) \K{1}$. Conjecture \ref{conj_chi} is therefore true for triangulated graphs.
	
	Let us now deal with Conjecture \ref{conj_delta}. Let $v$ be a vertex of degree $\Delta(G)$ in $G$. We consider the partition $(N_1(v),N_2(v))$ of the neighborhood $N(v)$ of $v$, where $N_1(v)$ contains all vertices of $N(v)$ of degree 1 in $G$ (i.e., $v$ is the only neighbor of every vertex of $N_1(v)$). Also, we consider the partition $(\overline{N}_1(v),\overline{N}_2(v))$ of the set $\overline{N}(v)$ of vertices of $G$ that are not adjacent to $v$, where $\overline{N}_1(v)$ contains all vertices of $\overline{N}(v)$ of degree 0 in $G$. If $N_2(v)\cup \overline{N}_2(v)\neq \emptyset$ then $G[N_2(v)\cup \overline{N}_2(v)\cup\{v\}]$ contains a simplicial vertex $w\neq v$ (since it is also a triangulated graph which is not a clique). Clearly, $w$ is simplicial in the whole graph that includes $N_1(v)$ and $\overline{N}_1(v)$. Then :
\begin{itemize}\setlength\itemsep{-3pt}
		\item If $w\in N_2(G)$, we can remove all edges incident to $w$, except the one that links $w$ with $v$. We thus get a new triangulated graph in which 
		at least one vertex has been transferred from $N_2(v)$ to $N_1(v)$,  vertices of $\overline{N}_2(v)$ may have transferred to $\overline{N}_1(v)$, but no vertex has undergone the reverse transfers.
		\item If $w\in \overline{N}_2(v)$, we can remove all edges incident to $w$. We thus get a new triangulated graph in which at least one vertex has been transferred from $\overline{N}_2(v)$ to $\overline{N}_1(v)$,  vertices of $N_2(v)$ may have transferred to $N_1(v)$, but no vertex has undergone the reverse transfers.
	\end{itemize}
	Note that in both cases, no vertex has been transferred from $\overline{N}(v)$ to $N(v)$ or vice versa. Hence, by repeatedly applying the above mentioned  edge removals, we get $N_2(v)=\overline{N}_2(v)=\emptyset$, which means that the resulting graph is $ \K{1,\Delta(G)} \cup (n{-}\Delta(G)-1)\K{1}$. Again, we know from Theorem  \ref{thm:removeSimplicialEdge} that each of the edge removals performed strictly decreases $\avcol(G)$, which proves  that $\avcol(G)\geq \avcol(\K{1,\Delta(G)} \cup (n{-}\Delta(G)-1)\K{1})$, with equality if and only if $G\simeq \K{1,\Delta(G)} \cup (n{-}\Delta(G)-1)\K{1}$.
\end{proof}

The three conjectures come from the discovery systems \GPH~\cite{GPH} and \linebreak\PHOEG~\cite{PHOEG}.
Note that despite the apparent simplicity of Conjecture~\ref{conj_empty}, its validity cannot be proven by simple intuitive means such as sequential edge removal.
Indeed, there are graphs, for example $\K{2,4}$, for which the removal of any edge strictly increases $\avcol(G)$. Also, we cannot proceed by induction on the the number of connected components of $G$. Indeed, there are pairs of graphs $G_1,G_2$ such that $\avcol(G_1)<\avcol(G_2)$ while $\avcol(G_1{\cup}\K{1})>\avcol(G_2{\cup}\K{1})$. For example, for $G_1=\K{2,3}$ and $G_2=\K{3}{\cup}2\K{1}$, we have
$$
\avcol(G_1)=3.5<3.529=\avcol(G_2)\quad\quad\mbox{and}\quad\quad \avcol(G_1{\cup}\K{1})=3.867>3.831=\avcol(G_2{\cup}\K{1}).
$$

Note that proving that Conjecture~\ref{conj_delta} is true for all graphs $G$ of order $n$ and maximum degree $\Delta(G)=n-1$ is as difficult as proving Conjecture~\ref{conj_empty}. Indeed, let $v$ be a vertex of degree $n-1$ in a graph $G$ of order $n$. Since $v$ is a dominant vertex of $G$, we know from Corollary~\ref{pro_dominant} that $\avcol(G)=\avcol(G-v)$+1. Hence, minimizing $\avcol(G)$ is equivalent to minimizing $\avcol(G-v)$, with no maximum degree constraint on $G-v$. We show in the next section that Conjectures~\ref{conj_chi} and~\ref{conj_delta} (and therefore \ref{conj_empty}) are true for graphs of maximum degree at most 2.

\subsection{Proof of the conjectures for graphs $G$ with $\Delta(G)\le 2$}

We start this section with a simple proof of the validity of Conjectures~\ref{conj_chi} and~\ref{conj_delta} when $\Delta(G)=1$.

\begin{thm} \label{thm:lb_delta1}
	Let $G$ be a graph of order $n$ and maximum degree $\Delta(G) = 1$. Then,
	$$
	L_2(n,\chi(G)) = L_3(n,\Delta(G){+}1) \le \A(G),
	$$
	with equality if and only if $G \simeq \K{2} {\cup} (n{-}2) \K{1}$.
\end{thm}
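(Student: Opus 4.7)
The plan is to first observe that a graph of maximum degree exactly $1$ must be isomorphic, up to relabeling, to $q\K{2}\cup(n-2q)\K{1}$ for some integer $q\geq 1$, because every component is either a single edge or an isolated vertex. Since $G$ has at least one edge but contains no triangle (indeed no vertex of degree $2$), we have $\chi(G)=2$, and of course $\Delta(G)+1=2$. Using that $\K{1,1}\simeq\K{2}$, the claimed identity $L_2(n,\chi(G))=L_3(n,\Delta(G)+1)$ is then immediate from the definitions: both sides are equal to $\avcol(\K{2}\cup(n-2)\K{1})$.

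For the main inequality I would reduce $G$ to $\K{2}\cup(n-2)\K{1}$ by a sequence of edge removals, applying Theorem~\ref{thm:removeSimplicialEdge} at each step. The key observation is that in $G=q\K{2}\cup(n-2q)\K{1}$, any endpoint $v$ of one of the $\K{2}$ components has $|N(v)|=1$, so $G[N(v)]$ is trivially a clique and $v$ is simplicial of degree~$1$. If $w$ denotes its unique neighbor, then removing the edge $vw$ produces $(q-1)\K{2}\cup(n-2q+2)\K{1}$, and by Theorem~\ref{thm:removeSimplicialEdge} this strictly decreases $\avcol$. Applied iteratively $q-1$ times, this reduces $G$ to $\K{2}\cup(n-2)\K{1}$ through a strictly decreasing chain of $\avcol$ values, yielding
$$\avcol(G)>\avcol(\K{2}\cup(n-2)\K{1})$$
whenever $q\geq 2$. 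When $q=1$, the graph is already $\K{2}\cup(n-2)\K{1}$, giving equality.

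Combining the two steps establishes the inequality together with the equality case. There is really no hard step here: the structural rigidity of graphs with $\Delta=1$ forces a one-parameter family indexed by $q$, degree-$1$ vertices are automatically simplicial, and Theorem~\ref{thm:removeSimplicialEdge} then takes care of the monotonicity. The genuinely interesting difficulties, which will be handled separately in the $\Delta(G)=2$ case, do not yet appear.
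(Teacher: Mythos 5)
Your proposal is correct and follows essentially the same route as the paper: identify $G$ as $q\K{2}\cup(n-2q)\K{1}$, note $\chi(G)=2$ and $\K{1,1}\simeq\K{2}$ to get $L_2=L_3$, and then apply Theorem~\ref{thm:removeSimplicialEdge} repeatedly to the (simplicial) endpoints of the $\K{2}$ components to reduce to $\K{2}\cup(n-2)\K{1}$ with a strict decrease of $\avcol$ at each step. No issues.
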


\begin{proof}
	Since $\Delta(G) = 1$, we have $\chi(G) = 2$, which implies $$L_2(n,\chi(G)){=}L_2(n,2){=}\avcol(\K{2} {\cup} (n{-}2) \K{1}){=}\avcol(\K{1,1} {\cup} (n{-}2) \K{1})=L_3(n,2){=}L_3(n,\Delta(G){+}1).$$
	Note also that $\Delta(G) = 1$ implies $G \simeq p \K{2} {\cup} (n{-}2p) \K{1}$ for $p \ge 1$. Hence, all vertices in $G$ are simplicial. We can thus sequentially remove all edges of $G$, except one, and it follows from Theorem ~\ref{thm:removeSimplicialEdge} that $\avcol(G)\ge \avcol(\K{2} {\cup} (n{-}2) \K{1})$, with equality if and only $G \simeq \K{2} {\cup} (n{-}2)\K{1}$.
\end{proof}

The proofs that Conjectures~\ref{conj_chi} and~\ref{conj_delta} are true when $\Delta(G)=2$ are more complex. We first prove some intermediate results in the form of lemmas.

\begin{lem}\label{lem:12}
	$\A(G\cup \C{n})>\A(G\cup \Path{n})$ for all $n\geq 3$ and all graphs $G$.
\end{lem}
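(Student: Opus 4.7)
The plan is to reduce the inequality via the deletion-contraction identity \eqref{rec_plus} and then to invoke Theorem \ref{thm:pro_avcolInfSum}. Let $v_1$ and $v_n$ denote the two endpoints of the $\Path{n}$ component of $G\cup\Path{n}$. Since $v_1v_n$ is a non-edge, adding it produces $(G\cup\Path{n})+v_1v_n = G\cup\C{n}$, while contracting it produces $\merge{(G\cup\Path{n})}{v_1v_n} = G\cup\C{n-1}$ (with the convention $\C{2}=\K{2}$ when $n=3$, so the contraction of $\Path{3}$ at its two endpoints yields $\K{2}$). Equation \eqref{rec_plus} then yields
\begin{align*}
\ncol(G\cup\Path{n}) &= \ncol(G\cup\C{n}) + \ncol(G\cup\C{n-1}),\\
\T(G\cup\Path{n}) &= \T(G\cup\C{n}) + \T(G\cup\C{n-1}).
\end{align*}

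Applying Theorem \ref{thm:pro_avcolInfSum} with $H=G\cup\C{n}$, $F_1=G\cup\C{n-1}$ and $\alpha_1=1$, the lemma reduces to the strict cycle-monotonicity
\[
(\star)\qquad \A(G\cup\C{n-1}) < \A(G\cup\C{n}).
\]
In fact, the identities above express $\A(G\cup\Path{n})$ as the convex combination of $\A(G\cup\C{n})$ and $\A(G\cup\C{n-1})$ with positive weights $\ncol(G\cup\C{n})$ and $\ncol(G\cup\C{n-1})$, so the lemma is actually \emph{equivalent} to $(\star)$. I would then prove $(\star)$ by induction on $n$. The base case $n=3$ is immediate: it becomes $\A(G\cup\K{2}) < \A(G\cup\K{3})$, which follows from Corollary \ref{cor:addLinkedToClique} since $\K{3}$ is obtained from $\K{2}$ by adjoining a simplicial vertex.

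The hard part is the inductive step for $n\ge 4$. A direct use of Theorem \ref{thm:crossProductAvCol} on the pair $(\C{n},\C{n-1})$ is not available: its sufficient condition $\ncolk(\C{n},k)\ncolk(\C{n-1},k') \ge \ncolk(\C{n-1},k)\ncolk(\C{n},k')$ for $k>k'$ fails already at $(k,k')=(3,2)$ when $n=4$ (the left side is $0$ and the right side is $1$). My plan is instead to exploit the structural observation that $G\cup\C{n}$ is built from $G\cup\Path{n-1}$ by attaching a single new vertex $w$ of degree $2$ joining the two endpoints of the path, so that Corollary \ref{cor:5} provides the intermediate bound $\A(G\cup\C{n}) > \A(G\cup\Path{n-1})$. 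Combining this with the inductive hypothesis, and with the analogous identity $\ncol(G\cup\Path{n-1}) = \ncol(G\cup\C{n-1}) + \ncol(G\cup\C{n-2})$ that rewrites $\A(G\cup\Path{n-1})$ as a weighted average of $\A(G\cup\C{n-1})$ and $\A(G\cup\C{n-2})$, one expands every average via Lemma \ref{lem:colUnion} into sums over $\ncol(G\cup\K{k})$ and $\T(G\cup\K{k})$, and concludes $(\star)$ by invoking the strict monotonicity of $k\mapsto\A(G\cup\K{k})$ from Corollary \ref{cor:addLinkedToClique}. This final quantitative comparison is where the real technical work of the proof lies.
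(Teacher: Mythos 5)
Your reduction is sound as far as it goes: the deletion--contraction identity $\ncol(G\cup \Path{n})=\ncol(G\cup \C{n})+\ncol(G\cup \C{n-1})$ (and the same for $\T$) is the identity underlying the paper's own proof, your application of Theorem~\ref{thm:pro_avcolInfSum} is valid, the observation that the lemma is \emph{equivalent} to $(\star)$ because $\A(G\cup\Path{n})$ is a strict convex combination of $\A(G\cup\C{n})$ and $\A(G\cup\C{n-1})$ is correct, and the base case $n=3$ is fine.

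The gap is the inductive step for $n\ge 4$: what you give is not a proof but a description of where a proof would be needed, and the ingredients you name cannot supply it. The two facts you assemble, $\A(G\cup\C{n})>\A(G\cup\Path{n-1})$ (Corollary~\ref{cor:5}) and $\A(G\cup\Path{n-1})<\A(G\cup\C{n-1})$ (inductive hypothesis plus the convex-combination identity at level $n-1$), both place $\A(G\cup\Path{n-1})$ \emph{below} each of the two quantities you must compare, so they say nothing about the relative order of $\A(G\cup\C{n})$ and $\A(G\cup\C{n-1})$. Your fallback --- expand with Lemma~\ref{lem:colUnion} and invoke the monotonicity of $k\mapsto\A(G\cup\K{k})$ --- is precisely the mechanism of Theorem~\ref{thm:crossProductAvCol}, and you have already (correctly) checked that its sign hypothesis on $f(k,k')$ fails for the pair $(\C{n},\C{n-1})$; once the coefficients $f(k,k')$ have both signs, the positivity of the resulting bilinear sum is exactly as hard as $(\star)$ itself. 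Worse, $(\star)$ is not even true in the boundary case $n=4$ with $G$ having no vertices: a direct count gives $\A(\C{3})=\A(\C{4})=3$ (and, consistently with your equivalence, $\A(\C{4})=\A(\Path{4})=3$), so the strict inequality you are inducting on must somewhere use the presence of $G$, and the step from $n=3$ to $n=4$ cannot be closed as stated. The essential content of the lemma is therefore still missing from your argument.
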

\begin{proof}
	Let $H\simeq \Path{2}$ if $n=3$ and $H\simeq\C{n-1}$ if $n>3$. We know from Equations (\ref{rec_minus}) that $\ncol(G\cup \C{n})=\ncol(G\cup \Path{n})-\ncol(G\cup H)$ and $\totcol(G\cup \C{n})=\totcol(G\cup \Path{n})-\totcol(G\cup H)$.  Since $\Path{n}$ is a partial subgraph of $\C{n}$, we have $\ncol(G\cup \C{n})>\ncol(G\cup \Path{n})$. Altogether, this gives
	\begin{align*}
	&\avcol(G\cup \C{n})-\A(G\cup H)
	&&=\frac{\totcol(G\cup\C{n})}{\ncol(G\cup\C{n})}-\frac{\totcol(G\cup H)}{\ncol(G\cup H)}\\
	&&&=
	\frac{\totcol(G\cup \Path{n})-\totcol(G\cup H)}{\ncol(G\cup \Path{n})-\ncol(G\cup H)}-\frac{\totcol(G\cup H)}{\ncol(G\cup H)}\\
	&&&=\frac{\totcol(G\cup \Path{n})\ncol(G\cup H)-\totcol(G\cup H)\ncol(G\cup \Path{n})}{\ncol(G\cup \C{n})\ncol(G\cup H)}\\
	&&&>\frac{\totcol(G\cup \Path{n})\ncol(G\cup H)-\totcol(G\cup H)\ncol(G\cup \Path{n})}{\ncol(G\cup \Path{n})\ncol(G\cup H)}\\
	&&&=\frac{\totcol(G\cup\Path{n})}{\ncol(G\cup\Path{n})}-\frac{\totcol(G\cup H)}{\ncol(G\cup H)}\\
	&&&=
	\avcol(G\cup \Path{n})-\A(G\cup H)\\
	\iff&\A(G\cup \C{n})>\A(G\cup \Path{n}).&&\qedhere
	\end{align*}
\end{proof}

For $n\geq 3$, let $\Q{n}$ be the graph obtained from $\Path{n}$ by adding an edge between an extremity $v$ of $\Path{n}$ and the vertex at distance 2 from $v$ on $\Path{n}$.

\begin{lem} \label{prop:ncolkPath+}
	If $n\geq 3$, $0\leq x\leq p$ and $1\le k\le n$, then
	$$\ncolk(\Q{n} \cup p\K{1},k) = \sum_{i=0}^x {x \choose i}\ncolk(\Q{n+i} \cup (p-x)\K{1},k).$$
\end{lem}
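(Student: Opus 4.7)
The plan is to prove the identity by induction on $x$, with a single application of the deletion-contraction rule \eqref{recs_plus} providing the base case and Pascal's identity handling the inductive step.

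For the base case $x=1$, let $w$ denote the unique degree-one endpoint of $\Q{n}$ (the vertex farthest from the triangle), and let $v$ be any one of the $p$ isolated vertices of $\Q{n}\cup p\K{1}$. Since $vw\notin E(\Q{n}\cup p\K{1})$, Equation~\eqref{recs_plus} yields
$$\ncolk(\Q{n}\cup p\K{1},k) = \ncolk((\Q{n}\cup p\K{1})+vw,k) + \ncolk((\Q{n}\cup p\K{1})_{\mid vw},k).$$
Adding $vw$ turns $v$ into a pendant vertex attached to the end of $\Q{n}$, so $(\Q{n}\cup p\K{1})+vw \simeq \Q{n+1}\cup (p-1)\K{1}$. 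Since $v$ is isolated, contracting $vw$ merely deletes $v$, so $(\Q{n}\cup p\K{1})_{\mid vw} \simeq \Q{n}\cup (p-1)\K{1}$. This gives the identity for $x=1$.

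For the inductive step, I would assume the identity holds for some $x < p$ and apply the $x=1$ case (which is valid because $p-x \ge 1$) inside each summand on the right-hand side to expand
$$\ncolk(\Q{n+i}\cup(p-x)\K{1},k) = \ncolk(\Q{n+i+1}\cup(p-x-1)\K{1},k) + \ncolk(\Q{n+i}\cup(p-x-1)\K{1},k).$$
Splitting the sum into two parts, reindexing one of them by $j=i+1$, and then collecting coefficients by means of Pascal's identity $\binom{x}{j-1}+\binom{x}{j}=\binom{x+1}{j}$ yields
$$\ncolk(\Q{n}\cup p\K{1},k) = \sum_{j=0}^{x+1}\binom{x+1}{j}\ncolk(\Q{n+j}\cup(p-x-1)\K{1},k),$$
completing the induction.

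Conceptually, the identity is the iterated form of the one-step recurrence: each time we select an isolated vertex and either attach it to the current path-end (extending $\Q{n}$ to $\Q{n+1}$) or contract it onto that end-vertex (losing one isolated vertex). After $x$ such steps, the $\binom{x}{i}$ counts the subsets of the $x$ chosen isolated vertices that were attached, giving $\Q{n+i}\cup(p-x)\K{1}$. I do not anticipate a serious obstacle; the only delicate point is ensuring in the base case that the added edge truly extends $\Q{n}$ to $\Q{n+1}$ and that the contraction reduces to a simple vertex deletion, both of which hold because $v$ is isolated and $w$ is the degree-one endpoint.
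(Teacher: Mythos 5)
Your proof is correct and takes essentially the same approach as the paper: both arguments peel off one isolated vertex at a time via Equation~\eqref{recs_plus} (either attaching it as a pendant extending $\Q{n}$ to $\Q{n+1}$ or contracting it away) and then merge binomial coefficients with Pascal's identity, the only difference being that you run the induction upward on $x$ while the paper inducts on $p$ and applies the hypothesis with $x-1$ after one expansion. The single imprecision is your base case for $n=3$: $\Q{3}\simeq\C{3}$ has no degree-one endpoint, but the step survives because attaching a pendant vertex to \emph{any} vertex of the triangle still yields $\Q{4}$.
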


\begin{proof}
	The result is clearly true for $p=0$. For larger values of $p$, we proceed by induction. Since the result is clearly true for $x=0$, we assume $x\geq 1$. Equations \eqref{recs_plus} imply
	 \allowdisplaybreaks
	\begin{align*}
	\ncolk(\Q{n} \cup p\K{1},k) &= \ncolk(\Q{n+1} \cup (p{-}1)\K{1},k) + \ncolk(\Q{n} \cup (p{-}1)\K{1},k) \\
	=& \sum_{i=0}^{x-1} {x{-}1 \choose i}\ncolk(\Q{n+i+1} \cup (p{-}x)\K{1},k) + \sum_{i=0}^{x-1} {x{-}1 \choose i}\ncolk(\Q{n+i} \cup (p{-}x)\K{1},k) \\
	=& \sum_{i=1}^{x} {x{-}1 \choose i{-}1}\ncolk(\Q{n+i} \cup (p{-}x)\K{1},k) + \sum_{i=0}^{x-1} {x{-}1 \choose i}\ncolk(\Q{n+i} \cup (p{-}x)\K{1},k)\\
	=& \ncolk(\Q{n{+}x} {\cup} (p{-}x)\K{1},k) {+} \ncolk(\Q{n} {\cup} (p{-}x)\K{1},k) \\
	&+ \sum_{i=1}^{x-1} \left({x{-}1 \choose i{-}1}+{x{-}1 \choose i}\right)\ncolk(\Q{n{+}i} {\cup} (p{-}x)\K{1},k)\\
	=& \sum_{i=0}^{x} {x \choose i} \ncolk(\Q{n+i} \cup (p{-}x)\K{1},k).\vspace{-1cm}\qedhere
	\end{align*}
\end{proof}

\begin{lem} \label{prop:ncolkCn3}
	If $n\geq 3$ is an odd number and $1\leq k\le n$, then 
	$$\ncolk(\C{n} \cup p\K{1},k) = \sum_{i=0}^{(n-3)/2}\ncolk(\Q{2i+3} \cup p\K{1},k).$$
\end{lem}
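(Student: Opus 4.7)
The plan is to induct on the odd integer $n\geq 3$, reducing the identity for $\C{n}$ to the one for $\C{n-2}$ via two applications of the deletion-contraction rule \eqref{recs_minus}.

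First I would establish two auxiliary recursions. Applying \eqref{recs_minus} to the chord $v_1v_3$ of $\Q{n}$ (using its description as the path $v_1,\ldots,v_n$ together with the added edge $v_1v_3$), I observe that $\Q{n}-v_1v_3\simeq \Path{n}$ and $\merge{\Q{n}}{v_1v_3}\simeq \Path{n-1}$: merging $v_1$ and $v_3$ identifies the two edges incident to $v_2$ into one, and the new vertex has exactly $v_2$ and $v_4$ as neighbors, yielding a path. This gives
\[
\ncolk(\Q{n}\cup p\K{1},k) = \ncolk(\Path{n}\cup p\K{1},k) - \ncolk(\Path{n-1}\cup p\K{1},k).
\]
Similarly, applying \eqref{recs_minus} to any edge $uv$ of the cycle in $\C{n}$ (for $n\geq 4$) gives $\C{n}-uv\simeq \Path{n}$ and $\merge{\C{n}}{uv}\simeq \C{n-1}$, whence
\[
\ncolk(\C{n}\cup p\K{1},k) = \ncolk(\Path{n}\cup p\K{1},k) - \ncolk(\C{n-1}\cup p\K{1},k).
\]

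Iterating the second recursion one more step on $\C{n-1}$ and substituting the first, I obtain the key reduction
\[
\ncolk(\C{n}\cup p\K{1},k) = \ncolk(\Q{n}\cup p\K{1},k) + \ncolk(\C{n-2}\cup p\K{1},k),
\]
valid for all $n\geq 5$.

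The induction is then immediate. The base case $n=3$ holds because $\C{3}\simeq \Q{3}$ and the right-hand side reduces to the single term $\ncolk(\Q{3}\cup p\K{1},k)$. For the inductive step with odd $n\geq 5$, combining the key reduction above with the inductive hypothesis applied to $n-2$ gives
\[
\ncolk(\C{n}\cup p\K{1},k) = \ncolk(\Q{n}\cup p\K{1},k) + \sum_{i=0}^{(n-5)/2} \ncolk(\Q{2i+3}\cup p\K{1},k) = \sum_{i=0}^{(n-3)/2} \ncolk(\Q{2i+3}\cup p\K{1},k),
\]
since $\ncolk(\Q{n}\cup p\K{1},k)$ is precisely the $i=(n-3)/2$ term of the full sum. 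The only mildly delicate step is verifying the isomorphism $\merge{\Q{n}}{v_1v_3}\simeq \Path{n-1}$, but this is immediate from inspecting the edge set of $\Q{n}$; everything else is routine bookkeeping.
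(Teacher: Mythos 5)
Your proof is correct and follows essentially the same route as the paper: both reduce $\ncolk(\C{n}\cup p\K{1},k)$ to $\ncolk(\Q{n}\cup p\K{1},k)+\ncolk(\C{n-2}\cup p\K{1},k)$ by two applications of deletion--contraction and then induct on odd $n$ with base case $\C{3}\simeq\Q{3}$. The only cosmetic difference is that you derive $\ncolk(\Q{n}\cup p\K{1},k)=\ncolk(\Path{n}\cup p\K{1},k)-\ncolk(\Path{n-1}\cup p\K{1},k)$ by deleting the chord of $\Q{n}$, whereas the paper writes the same identity as an instance of edge addition applied to $\Path{n}$ via Equations \eqref{recs_plus}.
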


\begin{proof}
	The result is clearly true for $n=3$ since $\C{3}\simeq \Q{3}$. For larger values of $n$, we proceed by induction. It follows from Equations \eqref{recs_minus} and \eqref{recs_plus} that	
	\begin{align*}
		\ncolk(\C{n} \cup p\K{1},k) &=\ncolk(\Path{n} \cup p\K{1})-\ncolk(\C{n-1} \cup p\K{1},k)\\
	&=\Bigl(\ncolk(\Q{n} {\cup} p\K{1},k){+}\ncolk(\Path{n{-}1}{\cup} p\K{1},k)\Bigr){-}\Bigl(\ncolk(\Path{n{-}1} {\cup} p\K{1},k){-}\ncolk(\C{n{-}2} {\cup} p\K{1},k)\Bigr)\\
	&=\ncolk(\Q{n} \cup p\K{1},k)+\ncolk(\C{n-2} \cup p\K{1},k)\\
	&= \ncolk(\Q{n} \cup p\K{1},k) + \sum_{i=0}^{(n-5)/2}\ncolk(\Q{2i+3} \cup p\K{1},k) \\
	&= \sum_{i=0}^{(n-3)/2}\ncolk(\Q{2i+3} \cup p\K{1},k).
	\end{align*}
\end{proof}

\begin{lem} \label{cor:ncolkC3K1}
	If $n$ and $x$ are two numbers such that $5\leq x\leq n$ and $x$ is odd, then
	$$\smash\ncolk(\C{3} \cup (n-3)\K{1},k) = \ncolk(\C{x} \cup (n-x)\K{1},k) + \sum_{i=0}^{x-5} \alpha_i \ncolk(\Q{i+4} \cup (n-x)\K{1},k)$$
	where 
	$$\alpha_i=\begin{cases}
	\displaystyle{x-3 \choose i}-1&\mbox{if }i\mbox{ is even}\\[0.5cm]
	\displaystyle {x-3 \choose i}&\mbox{if }i\mbox{ is odd.}
	\end{cases}$$
	
\end{lem}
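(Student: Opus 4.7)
The plan is to express both sides in a common basis of quantities $\ncolk(\Q{m}\cup(n-x)\K{1},k)$ and then match coefficients. Since $\C{3}\simeq\Q{3}$, Lemma~\ref{prop:ncolkCn3} applied at $n=3$ reduces to the trivial identity $\ncolk(\C{3}\cup(n-3)\K{1},k)=\ncolk(\Q{3}\cup(n-3)\K{1},k)$. I would then apply Lemma~\ref{prop:ncolkPath+} to this last quantity, taking the lemma's $n$ to be $3$, its $p$ to be $n-3$, and letting $x-3$ play the role of its $x$ (legitimate because $0\le x-3\le n-3$), which produces the expansion
$$\ncolk(\C{3}\cup(n-3)\K{1},k)=\sum_{i=0}^{x-3}\binom{x-3}{i}\,\ncolk(\Q{3+i}\cup(n-x)\K{1},k). \qquad (\ast)$$

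Next I would invoke Lemma~\ref{prop:ncolkCn3} at the odd integer $x\geq 5$ with $p=n-x$, yielding
$$\ncolk(\C{x}\cup(n-x)\K{1},k)=\sum_{j=0}^{(x-3)/2}\ncolk(\Q{2j+3}\cup(n-x)\K{1},k).$$
The key observation is that the right-hand side of this second identity is precisely the subsum of $(\ast)$ obtained by restricting to the values of $i$ for which $3+i$ is odd (equivalently, $i$ is even), each such term appearing with coefficient $1$. Subtracting this identity from $(\ast)$ therefore decreases the coefficient of each even-$i$ term of $(\ast)$ by exactly $1$ and leaves the odd-$i$ terms untouched.

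The argument would then conclude by noting that the two boundary coefficients of the resulting linear combination (at $i=0$ and $i=x-3$, both even since $x$ is odd) equal $\binom{x-3}{0}-1=0$ and $\binom{x-3}{x-3}-1=0$ respectively, so the effective sum runs from $i=1$ to $i=x-4$; a single index shift then rewrites it as a sum over $\Q{i+4}$ with $0\leq i\leq x-5$ and delivers the announced coefficients $\alpha_i$. I do not foresee any real obstacle beyond careful bookkeeping: every step is a direct application of an already proved identity, and the only care required is in tracking the parity of the summation index and verifying the vanishing of the two boundary terms.
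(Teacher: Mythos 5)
Your route is exactly the paper's own: identify $\C{3}$ with $\Q{3}$, expand it via Lemma~\ref{prop:ncolkPath+} with parameters $(3,\,n-3,\,x-3)$, subtract the expansion of $\C{x}\cup(n-x)\K{1}$ given by Lemma~\ref{prop:ncolkCn3}, observe that the two boundary terms vanish, and re-index. All of this is sound up to the very last step, and the last step is precisely the ``careful bookkeeping'' you wave away: the index shift does \emph{not} deliver the announced coefficients. Before the shift, the coefficient of $\ncolk(\Q{3+i}\cup(n-x)\K{1},k)$ is $\binom{x-3}{i}-1$ for even $i$ and $\binom{x-3}{i}$ for odd $i$, over $1\le i\le x-4$. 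After substituting so that the graphs read $\Q{i+4}$ with $0\le i\le x-5$, the coefficient of $\ncolk(\Q{i+4}\cup(n-x)\K{1},k)$ becomes $\binom{x-3}{i+1}$ for even $i$ and $\binom{x-3}{i+1}-1$ for odd $i$: the lower index of the binomial moves from $i$ to $i+1$ and the parity roles swap, so this is not the stated $\alpha_i$. A concrete check exposes the discrepancy: for $n=x=5$ and $k=4$ one has $\ncolk(\C{3}\cup2\K{1},4)=7$, $\ncolk(\C{5},4)=5$ and $\ncolk(\Q{4},4)=1$; the stated formula predicts $5+\bigl(\binom{2}{0}-1\bigr)\cdot 1=5$, whereas the shifted coefficients give $5+\binom{2}{1}\cdot 1=7$, which is the correct value.

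In fairness, the paper's own proof commits the identical slip in its final display, so the lemma as printed is misstated rather than your derivation being off-track; the correct conclusion of the argument you (and the paper) run is
$$\ncolk(\C{3}\cup(n-3)\K{1},k)=\ncolk(\C{x}\cup(n-x)\K{1},k)+\sum_{i=0}^{x-5}\alpha_i\,\ncolk(\Q{i+4}\cup(n-x)\K{1},k)$$
with $\alpha_i=\binom{x-3}{i+1}$ for even $i$ and $\alpha_i=\binom{x-3}{i+1}-1$ for odd $i$. You should have caught this by testing a small case instead of asserting that the shift ``delivers the announced coefficients''; a proof that ends by claiming agreement with a formula must actually verify that agreement. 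Note that the only property of the $\alpha_i$ used downstream (in Corollary~\ref{cor:avcolC3Cx}) is their non-negativity, and the corrected coefficients are in fact all strictly positive for $0\le i\le x-5$, so the later results are unaffected.
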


\begin{proof} Since $\Q{3}\simeq\C{3}$ we know from Lemma \ref{prop:ncolkPath+} that
\begin{align*}
\ncolk(\Q{3} \cup (n-3)\K{1},k) =& \sum_{i=0}^{x-3} {x-3 \choose i}\ncolk(\Q{i+3} {\cup} (n-x)\K{1},k)\\
=& \sum_{i=0}^{(x{-}3)/2}{x{-}3 \choose 2i}\ncolk(\Q{2i+3} {\cup} (n{-}x)\K{1},k) \\
&{+} \sum_{i=0}^{(x{-}5)/2}{x{-}3 \choose 2i+1}\ncolk(\Q{2i+4} \cup (n{-}x)\K{1},k).
\end{align*}

\noindent It then follows from Lemma \ref{prop:ncolkCn3} that

\begin{align*}
\ncolk(\Q{3} {\cup} (n{-}3)\K{1},k) {=} & S(\C{x}{\cup} (n{-}x)\K{1},k)
	{+} \sum_{i=1}^{(x{-}5)/2}\left({x{-}3 \choose 2i}{-}1\right)\ncolk(\Q{2i+3} {\cup} (n{-}x)\K{1},k)\\
	& + \sum_{i=0}^{(x-5)/2}{x-3 \choose 2i+1}\ncolk(\Q{2i+4} \cup (n-x)\K{1},k) \\
	= & \ncolk(\C{x} \cup (n-x)\K{1},k) + \sum_{i=0}^{x-5} \alpha_i \ncolk(\Q{i+4} \cup (n-x)\K{1},k).
	\end{align*}
	
\end{proof}

\begin{lem} \label{cor:avcolPi+Cn}
	If $n\geq 5$ and $3\leq i<n$ then
	$$\avcol(\Q{i} \cup p\K{1}) < \avcol(\C{n} \cup p\K{1}) $$
\end{lem}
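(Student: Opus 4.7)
The plan is a two-step strategy: first reduce to the tightest case $i=n-1$ by monotonicity, then prove $\avcol(\Q{n-1} \cup p\K{1}) < \avcol(\C{n} \cup p\K{1})$ by induction on $n$. The first step uses Corollary \ref{cor:addLinkedToClique}: the pendant vertex at the end of the tail of $\Q{j+1}$ is simplicial, so removing it strictly decreases $\avcol$, yielding $\avcol(\Q{j+1} \cup p\K{1}) > \avcol(\Q{j} \cup p\K{1})$ for all $j \geq 3$. Iterating gives $\avcol(\Q{i} \cup p\K{1}) < \avcol(\Q{n-1} \cup p\K{1})$ for $3 \leq i < n-1$, so it suffices to prove the inequality in the single case $i = n-1$.

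For the base case $n=5$ (so $i=4$), I would apply Theorem \ref{thm:crossProductAvCol} with $H_1 = \C{5}$ and $H_2 = \Q{4}$. Since both graphs have chromatic number $3$, $\ncolk(H_j, k) = 0$ for $k \leq 2$, and the cross-ratio condition $\ncolk(\C{5}, k) \ncolk(\Q{4}, k') \geq \ncolk(\Q{4}, k) \ncolk(\C{5}, k')$ for $k > k'$ need only be checked on $k, k' \in \{3,4,5\}$; the explicit values $\ncolk(\C{5}, \cdot) = (5, 5, 1)$ and $\ncolk(\Q{4}, \cdot) = (2, 1, 0)$ confirm it, strictly at $(k, k') = (4, 3)$. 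For the inductive step ($n \geq 6$), I would use the recursion $\ncolk(\C{n}, k) = \ncolk(\Q{n}, k) + \ncolk(\C{n-2}, k)$ derived in the proof of Lemma \ref{prop:ncolkCn3} from \eqref{recs_plus} and \eqref{recs_minus}. It extends at once to $\ncol(\C{n} \cup p\K{1}) = \ncol(\Q{n} \cup p\K{1}) + \ncol(\C{n-2} \cup p\K{1})$ and the analogous identity for $\totcol$, so $\avcol(\C{n} \cup p\K{1})$ is a weighted average of $\avcol(\Q{n} \cup p\K{1})$ and $\avcol(\C{n-2} \cup p\K{1})$ weighted by their $\ncol$'s; the monotonicity step ensures the first of these exceeds $\avcol(\Q{n-1} \cup p\K{1})$.

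The main obstacle is that the induction hypothesis only gives $\avcol(\C{n-2} \cup p\K{1}) > \avcol(\Q{j} \cup p\K{1})$ for $j < n-2$, whereas we need a comparison with $\Q{n-1}$, and indeed $\avcol(\C{n-2} \cup p\K{1}) < \avcol(\Q{n-1} \cup p\K{1})$ can occur. One must therefore show quantitatively that the positive contribution from the $\Q{n}$ summand in the weighted average dominates any negative contribution from the $\C{n-2}$ summand, i.e.\ that
\[\ncol(\Q{n} \cup p\K{1})\bigl(\avcol(\Q{n} \cup p\K{1}) - \avcol(\Q{n-1} \cup p\K{1})\bigr) > \ncol(\C{n-2} \cup p\K{1})\bigl(\avcol(\Q{n-1} \cup p\K{1}) - \avcol(\C{n-2} \cup p\K{1})\bigr)\]
whenever the right-hand side is positive. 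This can be handled using the explicit Bell-number expressions of Proposition \ref{avcol-kn} (in particular $\ncol(\Q{n}) = \B{n-1} - \B{n-2}$) together with the log-convexity of the Bell numbers, which forces the weight $\ncol(\Q{n} \cup p\K{1})$ to grow much faster than $\ncol(\C{n-2} \cup p\K{1})$.
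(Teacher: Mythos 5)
Your first step (reduction to the single case $i=n-1$ by noting that the end vertex of the tail of $\Q{j+1}$ is simplicial, so that $\avcol(\Q{j}\cup p\K{1})<\avcol(\Q{j+1}\cup p\K{1})$) is correct and is essentially the paper's first step, and your base case $n=5$ via Theorem \ref{thm:crossProductAvCol} with the vectors $(5,5,1)$ and $(2,1,0)$ does check out. The gap is in the inductive step. Writing $\avcol(\C{n}\cup p\K{1})$ as the mediant of $\avcol(\Q{n}\cup p\K{1})$ and $\avcol(\C{n-2}\cup p\K{1})$ via $\ncolk(\C{n},k)=\ncolk(\Q{n},k)+\ncolk(\C{n-2},k)$ is fine, but the ``quantitative'' inequality you then need,
\[
\Bigl(\totcol(\Q{n}\cup p\K{1})-\avcol(\Q{n-1}\cup p\K{1})\,\ncol(\Q{n}\cup p\K{1})\Bigr)+\Bigl(\totcol(\C{n-2}\cup p\K{1})-\avcol(\Q{n-1}\cup p\K{1})\,\ncol(\C{n-2}\cup p\K{1})\Bigr)>0,
\]
is literally a restatement of the target $\avcol(\C{n}\cup p\K{1})>\avcol(\Q{n-1}\cup p\K{1})$. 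You correctly observe that the induction hypothesis says nothing about the second bracket (it only compares $\C{n-2}$ with $\Q{j}$ for $j\le n-3$, and indeed $\avcol(\C{3})=3<\tfrac{10}{3}=\avcol(\Q{4})$ shows the bracket can be negative), but the proposed fix --- ``the explicit Bell-number expressions together with log-convexity force the weight $\ncol(\Q{n}\cup p\K{1})$ to grow much faster'' --- is an assertion, not an argument. For general $p$ the two weights are binomial combinations of Bell numbers and no estimate relating the sizes of the two brackets is given, so the inductive step is not established.

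The paper avoids this difficulty entirely by inserting the path as an intermediate graph: after the same monotonicity reduction (via Corollary \ref{cor:5} rather than Corollary \ref{cor:addLinkedToClique}, which is immaterial since a pendant vertex satisfies both hypotheses), it quotes from \cite{Hertz21} the inequality $\avcol(\Q{n-1}\cup p\K{1})<\avcol(\Path{n}\cup p\K{1})$ and then applies Lemma \ref{lem:12} to get $\avcol(\Path{n}\cup p\K{1})<\avcol(\C{n}\cup p\K{1})$. To repair your argument you would either have to prove the $\Q{n-1}$-versus-$\Path{n}$ comparison yourself (for instance by a cross-product argument in the spirit of your base case) or actually carry out the quantitative estimate you allude to; as written, neither is done.
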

\begin{proof}
	Since $\Q{n-1}\cup p\K{1}$ is obtained from $\Q{i} \cup p\K{1}$ by iteratively adding vertices of degree $1$, we know from Corollary \ref{cor:5} that $\avcol(\Q{i}\cup p\K{1}) \leq \avcol(\Q{n-1} \cup p\K{1})$. Moreover, it is proved in \cite{Hertz21} that $\avcol(\Q{n-1} \cup p\K{1}) < \avcol(\Path{n} \cup p\K{1})$ for all $n\geq 5$ and $p\geq 0$. It then follows from Lemma \ref{lem:12} that
	$$\avcol(\Q{i}\cup p\K{1}) \leq \avcol(\Q{n-1} \cup p\K{1}) < \avcol(\Path{n} \cup p\K{1})< \avcol(\C{n} \cup p\K{1}).$$
\end{proof}

\begin{cor} \label{cor:avcolC3Cx}
	If  $n\geq 5$, $x$ is odd and $5\leq x\leq n$, then 
	$$\avcol(\C{3} \cup (n-3)\K{1}) < \avcol(\C{x} \cup (n-x)\K{1}).$$
\end{cor}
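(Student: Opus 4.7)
The plan is to apply Theorem~\ref{thm:pro_avcolInfSum} to the decomposition of $\ncolk(\C{3}\cup(n-3)\K{1},k)$ furnished by Lemma~\ref{cor:ncolkC3K1}. I will set $G=\C{3}\cup(n-3)\K{1}$, $H=\C{x}\cup(n-x)\K{1}$, and $F_i=\Q{i+4}\cup(n-x)\K{1}$ for $i=0,\dots,x-5$. Lemma~\ref{cor:ncolkC3K1} then provides, for every $k$, the identity $\ncolk(G,k)=\ncolk(H,k)+\sum_{i=0}^{x-5}\alpha_i\,\ncolk(F_i,k)$, with nonnegative coefficients $\alpha_i$ at least one of which is strictly positive (as one verifies from the explicit formula, using $x\ge 5$ and the fact that $x$ is odd).

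Next, I will sum this identity over $k$, and separately sum it after multiplying by $k$, to obtain the two global identities $\ncol(G)=\ncol(H)+\sum_i\alpha_i\ncol(F_i)$ and $\totcol(G)=\totcol(H)+\sum_i\alpha_i\totcol(F_i)$. After restricting the sum to those indices with $\alpha_i>0$, these are exactly the first two hypotheses of Theorem~\ref{thm:pro_avcolInfSum}.

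The remaining hypothesis to check is $\avcol(F_i)<\avcol(H)$ for every $i$. For every $i\in\{0,\dots,x-5\}$ one has $3\le i+4\le x-1<x$, with $x\ge 5$, so Lemma~\ref{cor:avcolPi+Cn}, applied with its parameter $n$ replaced by $x$ and its $p$ by $n-x$, delivers $\avcol(\Q{i+4}\cup(n-x)\K{1})<\avcol(\C{x}\cup(n-x)\K{1})$, which is precisely the required strict inequality $\avcol(F_i)<\avcol(H)$. Invoking Theorem~\ref{thm:pro_avcolInfSum} then yields $\avcol(G)<\avcol(H)$, which is the statement of the corollary.

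There is no real obstacle here: once Lemmas~\ref{cor:ncolkC3K1} and~\ref{cor:avcolPi+Cn} are in hand, the proof is essentially a matching of notation followed by a single invocation of Theorem~\ref{thm:pro_avcolInfSum}. The one minor point of care is the verification that at least one coefficient $\alpha_i$ in the decomposition is strictly positive, which is what guarantees that the conclusion of Theorem~\ref{thm:pro_avcolInfSum} is a strict inequality rather than just an equality.
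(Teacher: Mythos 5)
Your proposal follows exactly the paper's own route: the decomposition of $\ncolk(\C{3}\cup(n-3)\K{1},k)$ from Lemma~\ref{cor:ncolkC3K1}, the comparison $\avcol(\Q{i+4}\cup(n-x)\K{1})<\avcol(\C{x}\cup(n-x)\K{1})$ from Lemma~\ref{cor:avcolPi+Cn}, and a single application of Theorem~\ref{thm:pro_avcolInfSum}. The one step you defer to ``the explicit formula'' is, however, precisely where the printed statement of Lemma~\ref{cor:ncolkC3K1} would let you down: for $x=5$ the sum has the single index $i=0$, which is even, so the printed value $\alpha_0=\binom{x-3}{0}-1=0$ makes \emph{all} coefficients vanish, and the argument would then only yield equality of the two averages rather than the claimed strict inequality. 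The coefficients in that lemma are mis-indexed: retracing its proof, the term $\Q{i+4}$ with $i$ even arises from the binomial $\binom{x-3}{i+1}$, so the correct values are $\alpha_i=\binom{x-3}{i+1}$ for $i$ even and $\alpha_i=\binom{x-3}{i+1}-1$ for $i$ odd; in particular $\alpha_0=x-3\ge 2>0$ for every odd $x\ge 5$, which is exactly the strict positivity you need. With that correction your verification succeeds and the proof is complete; note that the paper's own proof of the corollary has the same blind spot, since it asserts strict positivity only for odd indices, of which there are none when $x=5$.
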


\begin{proof} Lemma \ref{cor:ncolkC3K1} implies
\begin{itemize}
\item $\ncol(\C{3} \cup (n-3)\K{1}) = \ncol(\C{x} \cup (n-x)\K{1}) + \displaystyle\sum_{i=0}^{x-5} \alpha_i \ncol(\Q{i+4} \cup (n-x)\K{1})$, and
\item $\totcol(\C{3} \cup (n-3)\K{1}) = \totcol(\C{x} \cup (n-x)\K{1}) + \displaystyle\sum_{i=0}^{x-5} \alpha_i \totcol(\Q{i+4} \cup (n-x)\K{1})$,
\end{itemize}
	where \begin{itemize}
	    \item $\alpha_i=\displaystyle{x-3 \choose i}-1\geq 0$ if $i$ is even, and
	   \item $\alpha_i=\displaystyle{x-3 \choose i}> 0$ if $i$ is odd.
	\end{itemize} 
		Also, we know from Lemma \ref{cor:avcolPi+Cn} that 
$\avcol(\Q{i+4} \cup p\K{1}) {<} \avcol(\C{x} \cup (n-x)\K{1}) $ for $i=0,\ldots,x{-}5$. Hence, it follows from Theorem~\ref{thm:pro_avcolInfSum} that $\avcol(\C{3} \cup (n-3)\K{1}) < \avcol(\C{x} \cup (n-x)\K{1})$.

\end{proof}

\noindent We are now ready to prove the validity of  Conjectures \ref{conj_chi} and \ref{conj_delta} when $\Delta(G) =2$.

\begin{thm} \label{thm:lb_delta2a}
	Let $G$ be a graph of order $n$ with $\Delta(G)=2$. 
	Then,
	$$\avcol(G)\geq L_2(n,\chi(G))
	$$
	with equality if and only if $G \simeq \K{\chi(G)} \cup (n{-}\chi(G)) \K{1}$.
\end{thm}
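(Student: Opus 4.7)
The plan is to transform $G$ into the target graph $\K{\chi(G)} \cup (n-\chi(G))\K{1}$ by a sequence of operations, each strictly decreasing $\avcol$. Since $\Delta(G) = 2$, the graph $G$ is a disjoint union of paths (including isolated vertices) and cycles $\C{k}$ with $k \geq 3$. Moreover, $\chi(G) \in \{2, 3\}$, with $\chi(G) = 3$ if and only if $G$ contains an odd cycle, so I would split the argument along these two cases.

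When $\chi(G) = 3$, I would first select one odd cycle component $\C{k^*}$ of $G$ to preserve. For every other cycle component $\C{k}$ of $G$, Lemma \ref{lem:12} allows me to replace $\C{k}$ with the path $\Path{k}$, strictly decreasing $\avcol$. The resulting graph consists of $\C{k^*}$ together with paths and isolated vertices. Each endpoint of a path is a simplicial vertex (its neighborhood is a single vertex, hence a clique), so Theorem \ref{thm:removeSimplicialEdge} lets me remove the incident edges one at a time, strictly decreasing $\avcol$, until each path has been broken into isolated vertices. At this point the graph is $\C{k^*} \cup (n-k^*)\K{1}$ with $k^*$ odd; if $k^* = 3$ we are done, and if $k^* \geq 5$ one final application of Corollary \ref{cor:avcolC3Cx} brings it to $\C{3} \cup (n-3)\K{1} = \K{3} \cup (n-3)\K{1}$ with another strict decrease. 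Hence $\avcol(G) \geq L_2(n,3)$, with equality if and only if no step was needed, i.e., if and only if $G \simeq \K{3} \cup (n-3)\K{1}$.

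When $\chi(G) = 2$, the graph $G$ is a disjoint union of paths and even cycles. I would use the same tools in the same order: replace every even cycle by a path via Lemma \ref{lem:12}, then reduce the resulting paths by removing edges at simplicial endpoints via Theorem \ref{thm:removeSimplicialEdge} until a single edge remains, yielding $\K{2} \cup (n-2)\K{1}$. Since $\Delta(G) = 2$ rules out $G \simeq \K{2} \cup (n-2)\K{1}$ (which has maximum degree $1$), at least one strictly decreasing step must take place, giving $\avcol(G) > L_2(n,2)$; the ``iff'' clause of the theorem is consistent because $\K{\chi(G)} \cup (n-\chi(G))\K{1}$ with $\chi(G) = 2$ simply cannot occur under the hypothesis $\Delta(G) = 2$.

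The main delicate point will be bookkeeping rather than any new estimate: at each operation I have to verify that the hypotheses of the invoked result are satisfied. In particular, Corollary \ref{cor:avcolC3Cx} requires the graph already to have the form $\C{x} \cup (n-x)\K{1}$, which is what forces the cycle-to-path replacements and the path-to-isolated-vertex reductions to be carried out first, before collapsing the surviving odd cycle down to $\C{3}$. No new inequality is needed; the argument is a correctly ordered invocation of previously established strict monotonicity results from Section \ref{sec_prop} together with Corollary \ref{cor:avcolC3Cx}.
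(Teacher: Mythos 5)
Your proposal is correct and follows essentially the same route as the paper: decompose $G$ into paths and cycles, use Lemma \ref{lem:12} to replace cycles by paths and Theorem \ref{thm:removeSimplicialEdge} to strip path edges at simplicial endpoints, reducing to $\K{2}\cup(n{-}2)\K{1}$ when $\chi(G)=2$ and to $\C{x}\cup(n{-}x)\K{1}$ followed by Corollary \ref{cor:avcolC3Cx} when $\chi(G)=3$. Your explicit remark that equality is vacuous in the $\chi(G)=2$ case (since $\K{2}\cup(n{-}2)\K{1}$ has maximum degree $1$) is a correct detail the paper leaves implicit.
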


\begin{proof}
	Since $\Delta(G)=2$, $G$ is the disjoint union of paths and cycles.  If $G$ does not contain any odd cycle, then $\chi(G)=2$. It then follows from Theorem~\ref{thm:removeSimplicialEdge} and Lemma~\ref{lem:12} that the edges of $G$ can be removed sequentially, with a strict decrease of $\avcol(G)$ at each step, until we get $\K{2} \cup (n{-}2) \K{1}$.
	
	If $\chi(G) {=} 3$, then at least one connected component of $G$ is an odd cycle $\C{x}$ with $x{\leq} n$.  Again, we know from Theorem~\ref{thm:removeSimplicialEdge} and Lemma~\ref{lem:12} that the edges of $G$ can be removed sequentially, with a strict decrease of $\avcol(G)$ at each step, until we get $\C{x} \cup (n{-}x) \K{1}$. It then follows from Corollary~\ref{cor:avcolC3Cx} that $\avcol(G)\geq\avcol(\C{x} \cup (n{-}x) \K{1})\geq \avcol(\C{3} \cup (n{-}3) \K{1})$, with equalities if and only if $G\simeq \C{3} \cup (n{-}3) \K{1}\simeq \K{3} \cup (n{-}3) \K{1}$.
	
\end{proof}

\begin{thm} \label{thm:lb_delta2b}
	Let $G$ be a graph of order $n$ with $\Delta(G)=2$. 
	Then,
	$$\avcol(G)\geq L_3(n,\Delta(G)+1)
	$$
	with equality if and only if $G \simeq \K{1,2} \cup (n{-}3) \K{1}$.
\end{thm}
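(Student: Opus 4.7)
The strategy follows the same template as the proof of Theorem~\ref{thm:lb_delta2a}: transform $G$ into the target graph $\K{1,2} \cup (n-3)\K{1}$ by a finite sequence of operations, each of which strictly decreases $\avcol$, such that equality holds precisely when the sequence is empty. Since $\Delta(G) = 2$, every connected component of $G$ is either a path or a cycle, and at least one component is a cycle $\C{k}$ with $k \geq 3$ or a path $\Path{k}$ with $k \geq 3$ (so that some vertex has degree~$2$).

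In a first phase, apply Lemma~\ref{lem:12} to replace each cycle component $\C{k}$ of $G$ with the path $\Path{k}$; each such replacement strictly decreases $\avcol$. After this phase, $G$ is a disjoint union of paths. In a second phase, the endpoint of any path component $\Path{k}$ with $k \geq 2$ has a single neighbor, which trivially induces a clique, so the endpoint is simplicial of degree~$1$; hence removing an endpoint edge is governed by Theorem~\ref{thm:removeSimplicialEdge} and strictly decreases $\avcol$. Perform such reductions in the following order:
\begin{itemize}\setlength\itemsep{-3pt}
    \item[(i)] while some path component has order $\geq 4$, remove one of its endpoint edges, replacing $\Path{k}$ with $\Path{k-1} \cup \K{1}$;
    \item[(ii)] while more than one $\Path{3}$ component remains, remove an endpoint edge of one of them, replacing that $\Path{3}$ with $\Path{2} \cup \K{1}$;
    \item[(iii)] while any $\Path{2}$ component remains, remove its edge (both its endpoints are simplicial of degree~$1$), replacing $\Path{2}$ with $2\K{1}$.
\end{itemize}
The procedure terminates with exactly one $\Path{3} = \K{1,2}$ surviving and all remaining vertices isolated, i.e., with the graph $\K{1,2} \cup (n-3)\K{1}$.

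Since each operation strictly decreases $\avcol$, we conclude $\avcol(G) \geq \avcol(\K{1,2} \cup (n-3)\K{1}) = L_3(n,3)$, with equality iff no operation was ever applied, iff $G \simeq \K{1,2} \cup (n-3)\K{1}$. Unlike Theorem~\ref{thm:lb_delta2a}, this argument does \emph{not} need the more delicate Corollary~\ref{cor:avcolC3Cx} comparing odd cycles of different orders, because Lemma~\ref{lem:12} lets us bypass cycles altogether in favor of paths. The only point deserving care is that Theorem~\ref{thm:removeSimplicialEdge} is correctly invoked at every step---especially in step~(iii), where we rely on the fact that both endpoints of a $\Path{2}$ remain simplicial of degree~$1$, so that the removal of that edge is legitimately an edge incident to a simplicial vertex---but no deeper technical obstacle arises. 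Note also that we do not need to maintain $\Delta \geq 2$ throughout the reduction, only the initial hypothesis $\Delta(G)=2$ and the structure of the final graph.
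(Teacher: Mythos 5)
Your proof is correct and follows essentially the same route as the paper's: both reduce $G$ to $\K{1,2}\cup(n{-}3)\K{1}$ by sequential edge removals that strictly decrease $\avcol$, invoking Lemma~\ref{lem:12} to break cycle components and Theorem~\ref{thm:removeSimplicialEdge} for edges at simplicial path-endpoints. The only cosmetic difference is that the paper fixes a degree-2 vertex $u$ and preserves its two incident edges, whereas you make the reduction order explicit and argue that a single $\Path{3}$ survives; the substance is identical.
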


\begin{proof}
	Since $\Delta(G)=2$, $G$ is the disjoint union of paths and cycles. Also, $G$ contains at least one vertex $u$ of degree 2. Let $v$ and $w$ be two neighbors of $u$ in $G$. It follows from Theorem~\ref{thm:removeSimplicialEdge} and Lemma~\ref{lem:12} that the edges of $G$ can be removed sequentially, with a strict decrease of $\avcol(G)$ at each step, until the edge set of the remaining graph $H$ is $\{uv,uw\}$. But $H$ is then isomorphic to $\K{1,2} \cup (n{-}3) \K{1}$.
	\end{proof}

\section{Concluding remarks}

We have established several properties for a recently defined graph invariant, namely the average number $\avcol(G)$ of colors in the non-equivalent colorings of a graph $G$. We then looked at bounds for $\avcol(G)$. It is easy to prove that $\avcol(G)\leq \avcol(\K{n})=n$ for all graphs of order $n$, with equality if and only if $G\simeq \K{n}$. Hence, $n$ is the best possible upper bound on $\avcol(G)$ for a graph $G$ of order $n$. We think that the best possible lower bound on $\avcol(G)$ for a graph $G$ of order $n$ is $\avcol(\E{n})$. We have shown that despite its apparent simplicity, this conjecture cannot be proven using simple techniques like sequential edge removal. We have then refined this conjecture by proposing lower bounds related to the chromatic number $\chi(G)$ and to the maximum degree $\Delta(G)$ of $G$. We have thus stated three open problems. We have  shown that these three conjectures are true for triangulated graphs and for graphs with maximum degree at most 2.




\bibliographystyle{acm}
\bibliography{avcolLB}

\end{document}